\tikzstyle{nodo}=[circle,draw,fill,inner sep=0pt,minimum size=%
\tikzstyle{infinito}=[circle,inner sep=0pt,minimum size=0mm]
\newcommand\R{{\mathbb R}}
\newcommand\Hmu{{\mathcal{M}_\mu}}
\newcommand\Sf{\mathcal S}
\newcommand\EE{{\mathcal E}}
\newcommand\f{\frac}
\newcommand\ee{{\mathcal E}}
\newcommand\JJ{{\mathcal J}}
\newcommand\JJn{{\mathcal J_\Omega^{nod}}}
\newcommand\MM{{\mathcal M}}
\newcommand{\intervaloo}[1]{\mathopen(#1\mathclose)}
\newcommand\eps{\varepsilon}
\newcommand\C{\mathcal C}
\newcommand\NN{\mathcal N}
\newcommand\barlam{\overline{\lambda}}
\newcommand\bargi{\overline{g}}
\newcommand\NL{{\mathcal N}_\lambda}
\newcommand{\wu}{{\widetilde u}}
\newcommand{\wlambda}{{\widetilde \lambda}}
\theoremstyle{definition}
\theoremstyle{plain}
\newtheorem{theorem}{Theorem}[section]
\newtheorem{proposition}[theorem]{Proposition}
\newtheorem{lemma}[theorem]{Lemma}
\newcounter{ass}
\theoremstyle{remark}
\newtheorem{remark}[theorem]{Remark}
\newtheorem*{remark*}{Remark}
\theoremstyle{definition}
\DeclareMathOperator{\supp}{supp}
\numberwithin{equation}{section}
\date{}
\title{An action approach to nodal and least energy normalized solutions for nonlinear Schr\"odinger equations}
\author{Colette De Coster$^1$, Simone Dovetta$^2$, Damien Galant$^{1,3}$, Enrico Serra$^2$ 
	\\ \ \\{\small$^1$ Univ. Polytechnique Hauts-de-France, INSA Hauts-de-France, CERAMATHS - Laboratoire de}
	\\ {\small Mat\'eriaux C\'eramiques et de Math\'ematiques, F-59313 Valenciennes, France}
	 \\{\small$^2$Dipartimento di Scienze
		Matematiche ``G.L. Lagrange'', Politecnico di Torino } \\ {\small
		Corso Duca degli Abruzzi, 24, 10129 Torino, Italy} \\ 
		{\small$^3$ {\small  F.R.S.-FNRS and UMONS - Universit\'e de Mons, Mons, Belgium}
		}}
\begin{document}

\maketitle

\begin{abstract}
 We develop a new approach to the investigation of normalized solutions for nonlinear Schr\"odinger equations based on the analysis of the masses of ground states of the corresponding action functional. Our first result is a complete characterization of the masses of action ground states, obtained via a Darboux-type property for the derivative of the action ground state level. We then exploit this result to tackle normalized solutions with a twofold perspective. First, we prove existence of normalized nodal solutions for every mass in the $L^2$-subcritical regime, and for a whole interval of masses in the $L^2$-critical and supercritical cases. Then, we show when least energy normalized solutions/least energy normalized nodal solutions are  action ground states/nodal action ground states.     
\end{abstract}

\noindent{\small AMS Subject Classification: 35Q55, 49J40, 58E30
}
\smallskip

\noindent{\small Keywords: nonlinear Schr\"odinger, normalized solutions, nodal solutions, least energy, ground states}

\section{Introduction}

The present paper focuses on {\em normalized} solutions of nonlinear Schr\"odinger (NLS) equations with homogeneous Dirichlet boundary conditions on bounded domains, namely solutions of the problem
\begin{equation}
\label{nlse}
\begin{cases}
-\Delta u +\lambda u = |u|^{p-2}u  & \text{in }\Omega\\
u=0 & \text{on }\partial\Omega\\
\|u\|_{L^2(\Omega)}^2=\mu.
\end{cases}
\end{equation}
Here, $\Omega\subset\R^N$ is a connected bounded open set,  $\lambda$ and $\mu$ are real parameters, and the nonlinearity exponent satisfies
\[
p\in(2,2^*),\quad 2^*=\frac{2N}{N-2}\quad\left(2^*=\infty\text{ if }N=1,2\right).
\]
The attribute normalized for a function $u$ solving \eqref{nlse} comes from the fact that its $L^2$-norm (usually called the {\em mass}) is prescribed a priori. In our setting, this means that the parameter $\mu>0$ is given, whereas $\lambda$ (sometimes called the chemical potential or the frequency) is an unknown of the problem. As is well known, the problem has a variational structure, since weak solutions of \eqref{nlse} are critical points of the {\em energy} functional $E:H_0^1(\Omega)\to\R$
\begin{equation*}
E\left(u,\Omega\right):=\frac12\|\nabla u\|_{L^2(\Omega)}^2-\frac1p\|u\|_{L^p(\Omega)}^p
\end{equation*}
on the $L^2$-sphere
\[
\Hmu(\Omega):=\left\{u\in H_0^1(\Omega)\,:\,\|u\|_{L^2(\Omega)}^2=\mu\right\},
\]
$\lambda$ arising then as a Lagrange multiplier.

The study of normalized solutions for NLS equations gathered a constantly growing interest in the last decades. In particular, among all solutions with fixed mass, a specific attention can be naturally devoted to {\em least energy normalized solutions}, i.e. functions $u$ solving \eqref{nlse} and satisfying
\[
E(u,\Omega)=\inf\left\{E(v,\Omega)\,:\, v\text{ solves }\eqref{nlse}\text{ for some }\lambda\in\R\right\}.
\]
In the seminal papers \cite{CL82,J97} least energy positive solutions are identified for the problem on the whole $\R^N$ in the $L^2$-subcritical $p<2+\frac4N$ and $L^2$-supercritical $p>2+\frac4N$ regimes, respectively. When $p<2+\frac4N$, these least energy solutions can be found by solving the minimization problem
\[
\inf_{u\in\Hmu(\R^N)}E(u,\R^N),
\]
which is attained for every $\mu>0$. Conversely, when $p$ is $L^2$-supercritical this is no longer possible, as the energy $E$ is unbounded from below on $\Hmu(\R^N)$ for every $\mu$, and different approaches (e.g. of mountain pass type) are needed. Since \cite{J97}, normalized solutions on $\R^N$ have been largely investigated in various settings (see e.g. \cite{BdV,BMRV,BS,BZZ, JL, JZZ, JZZ2, MRV, S1, S2,WW} and references therein) and a comprehensive theory is by now available. On the contrary, on bounded domains the literature is more limited, to date, and the general portrait is less understood. Given the boundedness of the domain $\Omega$, in the $L^2$-subcritical regime $p<2+\frac4N$ least energy solutions always exist and they are again the global minimizers of $E$ on the whole set $\Hmu(\Omega)$. The same is true for masses smaller than a threshold (independent of $\Omega$) in the $L^2$-critical case $p=2+\frac4N$. When $p>2+\frac4N$, instead, even existence of positive solutions (not necessarily least energy) is more involved, because many crucial properties of the problem on $\R^N$ are no longer available on bounded domains (as e.g. the invariance under dilations of the ambient space). To the best of our knowledge, a complete description of the set of normalized positive solutions is available only when $\Omega$ is a ball and it is given in \cite{NTV1} (similar results have then been obtained also for NLS systems in \cite{NTV2}). As for general domains, existence results for solutions (not necessarily positive) with fixed Morse index have been derived when $p$ is $L^2$-supercritical in \cite{PV,PVY}, whereas in \cite{PPVV} specific positive solutions are constructed for large masses when $p<2+\frac4N$, small masses when $p>2+\frac4N$, and masses close to an explicit value when $p=2+\frac4N$. 

\smallskip
The aim of the present paper is to fit in this research line focusing on the following questions:
\begin{enumerate}
	\item[1)] how to find least energy normalized solutions in the $L^2$-supercritical regime?
	\item[2)] how to find normalized nodal solutions?
\end{enumerate} 
As far as we know, to date both questions are essentially open. As for 1), the only available result we are aware of is the already mentioned one on the ball reported in \cite{NTV1}, that identifies the normalized solution with minimal energy among {\em positive} ones. Even in this special setting, it is not known whether this is also the least energy solution among {\em all} solutions with the same mass. For domains other than the ball, nothing seems to be known. The situation concerning 2) is even worse, due to the lack of general existence results for normalized nodal solutions. Actually, all papers in the literature either restrict their attention to positive solutions, or do not allow to recover any specific information on the sign of the solutions under exam. 

Though at a first sight questions 1) and 2) may appear somehow far from each other, a common feature that may perhaps explain the lack of results in both directions is the absence of suitable variational frameworks to tackle them. 

This is readily understood when looking for $L^2$-supercritical least energy solutions, for which we already observed that it is not possible to simply minimize $E$ on the whole manifold $\Hmu(\Omega)$ (as one does in the $L^2$-subcritical regime). 

Such a difficulty is all the  more severe for normalized nodal solutions, for which a proper variational framework involving the energy is not available even when $p$ is $L^2$-subcritical. For instance, one may be first tempted to consider
\[
\inf_{\substack{u\in\Hmu(\Omega) \\u^\pm\not\equiv0}}E(u,\Omega),
\] 
where $u^+$ and $u^- = \min (u,0)$ are the positive and negative parts of $u$, but it is evident that this number is never attained, as it  coincides with the infimum of $E$ on the whole $\Hmu(\Omega)$ (with no sign constraint). 
Even a slightly more sophisticated approach considering the two-parameter minimization problem 
\[
\inf_{\substack{u^+\in \MM_{\mu_1}(\Omega), u^-\in \MM_{\mu_2}(\Omega) \\ \mu=\mu_1+\mu_2}}E(u,\Omega),
\]
leads to seemingly insuperable difficulties.

\smallskip
In this paper, we tackle both 1) and 2) using a unified approach: we take advantage of already available existence results for solutions of the problem
\begin{equation}
\label{nlseNOMASS}
\begin{cases}
-\Delta u +\lambda u = |u|^{p-2}u  & \text{in }\Omega\\
u=0 & \text{on }\partial\Omega
\end{cases}
\end{equation}
for a {\em given} $\lambda\in\R$, that is \eqref{nlse} without the mass constraint, and  we characterize the dependence on $\lambda$ of the mass of such solutions. Indeed, for a fixed $\lambda\in\R$, it is well known that positive (up to a change of sign) solutions of \eqref{nlseNOMASS} can be  found variationally e.g. by considering (for any $p \in (2,2^*)$) the minimization problem  
\begin{equation*}
\JJ_\Omega(\lambda):=\inf_{u\in\NN_\lambda(\Omega)}J_\lambda(u,\Omega)
\end{equation*}
for the {\em action} functional $J_\lambda(\,\cdot\,, \Omega) :H_0^1(\Omega)\to\R$
\begin{equation*}
J_\lambda(u,\Omega):=\frac12\|\nabla u\|_{L^2(\Omega)}^2+\frac\lambda2\|u\|_{L^2(\Omega)}^2-\frac1p\|u\|_{L^p(\Omega)}^p
\end{equation*}
constrained to the associated {\em Nehari manifold}
\[
\begin{split}
\NN_\lambda(\Omega):=&\,\left\{u\in H_0^1(\Omega)\setminus\{0\} \,:\, J_\lambda'(u,\Omega)u=0\right\}\\
=&\,\left\{u\in H_0^1(\Omega)\setminus\{0\}  \,:\,\|\nabla u\|_{L^2(\Omega)}^2+\lambda\|u\|_{L^2(\Omega)}^2=\|u\|_{L^p(\Omega)}^p\right\}.
\end{split}
\]
Similarly, when looking for nodal solutions one can consider the problem
\begin{equation*}
\JJ_\Omega^{nod}(\lambda):=\inf_{\NN_\lambda^{nod}(\Omega)}J_\lambda(u,\Omega),
\end{equation*}
where
\[
\NN_\lambda^{nod}(\Omega):=\left\{u\in H_0^1(\Omega)\,:\,u^\pm\in\NN_\lambda(\Omega)\right\}
\]
is the {\em nodal Nehari set}. Depending on the value of $\lambda$, existence of solutions of these two problems, usually called {\em action ground states} and {\em nodal action ground states} respectively, is essentially well known (see Section \ref{sec:prel} below for further details). 

The main contribution of the present paper is the following complete characterization of the masses of all action and nodal action ground states. 
\begin{theorem}
	\label{thm:masses}
	Let $\Omega\subset \R^N$ be open and bounded and, for every $p\in(2,2^*)$, let
	\begin{equation}
	\label{M}
	\begin{split}
	M_p(\Omega):=&\,\left\{\|u\|_{L^2(\Omega)}^2\,:\, u\in\NN_\lambda(\Omega)\text{ and }J_{\lambda}(u,\Omega)=\JJ_\Omega(\lambda)\text{ for some }\lambda\in\R\right\}\\
	M_p^{nod}(\Omega):=&\,\left\{\|u\|_{L^2(\Omega)}^2\,:\, u\in\NN_\lambda^{nod}(\Omega)\text{ and }J_{\lambda}(u,\Omega)=\JJ_\Omega^{nod}(\lambda)\text{ for some }\lambda\in\R\right\}
	\end{split}
	\end{equation}
	be the set of masses of all action ground states and nodal action ground states, respectively. Then
	\begin{itemize}
		\item[(i)] if $p<2+\frac4N$, then $M_p(\Omega)=M_p^{nod}(\Omega)=(0,\infty)$;
		
		\item[(ii)] if $p=2+\frac4N$, then there exist $0<\mu_p, \mu_p^{nod}<\infty$ such that either $M_p=(0,\mu_p)$ or $M_p=(0,\mu_p]$, and either $M_p^{nod}=(0,\mu_p^{nod})$ or $M_p^{nod}=(0,\mu_p^{nod}]$;
		
		\item[(iii)] if $p>2+\frac4N$, then there exist $0<\mu_p, \mu_p^{nod}<\infty$ such that $M_p=(0,\mu_p]$ and $M_p^{nod}=(0,\mu_p^{nod}]$.
	\end{itemize}
\end{theorem}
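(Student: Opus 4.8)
The plan is to interpret the mass as (twice) the derivative of the action ground state level and to deduce the three regimes from the behaviour of this derivative at the endpoints of the admissible $\lambda$-range. I first collect the basic facts about $\lambda\mapsto\JJ_\Omega(\lambda)$, defined on $\intervaloo{-\lambda_1(\Omega),+\infty}$ (where $\lambda_1(\Omega)$ is the first Dirichlet eigenvalue), and about $\lambda\mapsto\JJ_\Omega^{nod}(\lambda)$, defined on $\intervaloo{-\lambda_2(\Omega),+\infty}$ (where $\lambda_2(\Omega)$ is the second eigenvalue, the threshold making the nodal Nehari set nonempty through the nodal parts of $\varphi_2$): existence and $H_0^1(\Omega)$-boundedness of (nodal) ground states on compact $\lambda$-intervals, and the resulting local Lipschitz continuity. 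The decisive tool is an \emph{envelope relation}. Fixing a ground state $u_{\lambda_0}\in\NN_{\lambda_0}(\Omega)$ and projecting it onto $\NN_\lambda(\Omega)$ by a scaling $t(\lambda)u_{\lambda_0}$ with $t(\lambda_0)=1$, the function $\psi(\lambda):=J_\lambda(t(\lambda)u_{\lambda_0},\Omega)$ dominates $\JJ_\Omega$ and touches it at $\lambda_0$; since $u_{\lambda_0}$ lies on the Nehari manifold the first-order cost of reprojecting vanishes, so $\psi'(\lambda_0)=\partial_\lambda J_\lambda(u_{\lambda_0},\Omega)=\tfrac12\|u_{\lambda_0}\|_{L^2(\Omega)}^2$. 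Comparing difference quotients on each side yields, for every ground state $u_{\lambda_0}$,
\[
D^{+}\JJ_\Omega(\lambda_0)\le \tfrac12\|u_{\lambda_0}\|_{L^2(\Omega)}^2\le D_{-}\JJ_\Omega(\lambda_0),
\]
where $D^{+}$ and $D_{-}$ are the upper right and lower left Dini derivatives; the same holds for $\JJ_\Omega^{nod}$ upon projecting $u_{\lambda_0}^{+}$ and $u_{\lambda_0}^{-}$ separately.

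From this I would extract the Darboux-type property, i.e. that $M_p(\Omega)$ and $M_p^{nod}(\Omega)$ are intervals. Suppose $\mu_1<\mu_2$ are masses of ground states at parameters $\lambda_1<\lambda_2$ and let $\mu\in(\mu_1,\mu_2)$. The function $h(\lambda):=\JJ_\Omega(\lambda)-\tfrac\mu2\lambda$ is continuous on $\intervalcc{\lambda_1,\lambda_2}$; by the envelope inequalities $D^{+}h(\lambda_1)\le\tfrac12(\mu_1-\mu)<0$ and $D_{-}h(\lambda_2)\ge\tfrac12(\mu_2-\mu)>0$, so $h$ strictly decreases to the right of $\lambda_1$ and to the left of $\lambda_2$, hence attains its minimum at an interior point $\lambda^{*}$. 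At such a minimum all right difference quotients of $h$ are nonnegative and all left ones nonpositive, giving $D^{+}\JJ_\Omega(\lambda^{*})\ge\tfrac\mu2\ge D_{-}\JJ_\Omega(\lambda^{*})$; combined with the envelope relation this forces $\|u_{\lambda^{*}}\|_{L^2(\Omega)}^2=\mu$. Thus $\mu$ is realized as the mass of a ground state at $\lambda^{*}$, and the set of masses is an interval.

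It remains to locate the endpoints, which is where the three regimes appear. For the \emph{left endpoint}, testing with the first (resp. second) eigenfunction gives $\JJ_\Omega(\lambda)\le\max_{t>0}J_\lambda(t\varphi_1,\Omega)\to0$ as $\lambda\to-\lambda_1(\Omega)^{+}$ (resp. $\JJ_\Omega^{nod}(\lambda)\to0$ as $\lambda\to-\lambda_2(\Omega)^{+}$), because the form $(\lambda_k+\lambda)\|\varphi_k\|_{L^2(\Omega)}^2$ degenerates; since on the Nehari manifold $\JJ_\Omega(\lambda)=(\tfrac12-\tfrac1p)\|u_\lambda\|_{L^p(\Omega)}^p$ and $\|u_\lambda\|_{L^2(\Omega)}\le|\Omega|^{\frac12-\frac1p}\|u_\lambda\|_{L^p(\Omega)}$ on the bounded domain, the mass tends to $0$, so the lower endpoint is always $0$ and is never attained. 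For the \emph{right endpoint}, the scaling $u_\lambda(x)=\lambda^{\frac1{p-2}}v(\sqrt\lambda\,x)$ turns a ground state on $\Omega$ into a ground state of $-\Delta v+v=|v|^{p-2}v$ on the dilated domain $\sqrt\lambda\,\Omega$, which invades $\R^N$ as $\lambda\to+\infty$; a concentration/domain-monotonicity argument then gives convergence of the rescaled mass to the mass $m_\infty$ of the ground state on $\R^N$, whence
\[
\|u_\lambda\|_{L^2(\Omega)}^2=\lambda^{\frac{2}{p-2}-\frac N2}\bigl(m_\infty+o(1)\bigr),\qquad \lambda\to+\infty.
\]
The exponent $\frac2{p-2}-\frac N2$ is positive, zero or negative exactly when $p<2+\frac4N$, $p=2+\frac4N$ or $p>2+\frac4N$, so the mass tends respectively to $+\infty$, to $m_\infty$, or to $0$.

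Assembling these facts yields the statement. When $p<2+\frac4N$ the mass runs from $0$ to $+\infty$, and the Darboux property gives $M_p(\Omega)=(0,\infty)$. When $p>2+\frac4N$ the mass tends to $0$ at both endpoints while being positive inside, so by compactness of ground states over compact $\lambda$-ranges the (upper semicontinuous) mass attains a positive maximum $\mu_p$ and $M_p(\Omega)=(0,\mu_p]$. When $p=2+\frac4N$ the mass tends to $0$ on the left and to $m_\infty$ on the right, so its supremum $\mu_p$ is either the limit value (attained only if reached at some finite $\lambda$) or a strictly larger interior maximum, giving the dichotomy $M_p=(0,\mu_p)$ or $M_p=(0,\mu_p]$. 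The nodal statements follow verbatim, replacing $\NN_\lambda(\Omega)$, $\lambda_1(\Omega)$, $\varphi_1$ and $m_\infty$ by $\NN_\lambda^{nod}(\Omega)$, $\lambda_2(\Omega)$, $\varphi_2$ and the least energy nodal mass on $\R^N$. I expect two main obstacles: making the envelope relation and the Dini-derivative bookkeeping fully rigorous (in particular the vanishing of the first-order reprojection cost, and its nodal analogue for the separate scalings of $u^{+}$ and $u^{-}$), and proving the $\lambda\to+\infty$ convergence of the rescaled ground states on $\sqrt\lambda\,\Omega$ to the ground state on $\R^N$ together with convergence of masses; the attainment questions at the upper endpoint likewise rest on compactness of ground states.
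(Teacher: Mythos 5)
Your core strategy is the same as the paper's: the two--sided Dini-derivative inequality $D^{+}\JJ_\Omega(\lambda)\le\tfrac12\|u_\lambda\|_2^2\le D_{-}\JJ_\Omega(\lambda)$ obtained by reprojecting a fixed ground state onto nearby Nehari manifolds is exactly the paper's Propositions 2.1(iv) and 3.2, and your realization of an intermediate mass $\mu$ by locating an \emph{interior minimum} of $h(\lambda)=\JJ_\Omega(\lambda)-\tfrac\mu2\lambda$ (where the two Dini bounds pinch and force differentiability with derivative $\tfrac\mu2$) is precisely the paper's Lemma 4.2. The nodal bookkeeping you flag as an obstacle (well-definedness of the separate reprojections of $u^{\pm}$) is handled in the paper by a uniform lower bound on $\|u^{\pm}\|_p$ for ground states with $\lambda$ bounded away from $-\lambda_2$.

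The genuine gap is in your treatment of the right endpoint. You derive the three regimes from the claim that the rescaled ground states on $\sqrt\lambda\,\Omega$ converge, \emph{with convergence of masses}, to the ground state of $-\Delta v+v=|v|^{p-2}v$ on $\R^N$, so that $\|u_\lambda\|_2^2=\lambda^{\frac2{p-2}-\frac N2}(m_\infty+o(1))$. This is a strong concentration--compactness statement on expanding domains that you do not prove, and it is strictly more than is needed; worse, its nodal counterpart is false as stated, because least energy nodal solutions of the limit equation on $\R^N$ do not exist (the nodal level on $\R^N$ equals twice the positive ground state level and is not attained), so "the least energy nodal mass on $\R^N$" is not a well-defined object to converge to. The paper avoids solution-level asymptotics entirely: it bounds the mass through the \emph{level}, using on one side the Nehari identity $\lambda\|u\|_2^2\le\|u\|_p^p=\JJ_\Omega^{nod}(\lambda)/\kappa$ together with the asymptotics of $\JJ_\Omega^{nod}(\lambda)/\lambda$ (proved by explicit compactly supported two-bump test functions for the upper bound and by the inequality $\JJ_\Omega(\lambda)\ge\ee_\Omega(2\mu)+\lambda\mu$ for the lower bound), and on the other side the Gagliardo--Nirenberg inequality, which in the subcritical case yields directly $\|u\|_2^2\ge C\lambda^{\frac{2-\alpha}{p-2}}\to+\infty$ for \emph{every} element of $\NN_\lambda^{nod}(\Omega)$, with no compactness required. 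To make your argument rigorous you would either have to prove the rescaled convergence (a substantial separate task, delicate in the nodal case where the two bumps separate) or replace that step by level-based bounds of this kind. The remaining ingredients of your assembly (mass $\to0$ at the left endpoint via $\|u\|_2\le|\Omega|^{\frac12-\frac1p}\|u\|_p$ and the vanishing of the level, attainment of $\mu_p$ in the supercritical case by compactness along a maximizing sequence of masses) do match the paper's final argument.
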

Notice that Theorem \ref{thm:masses} holds for any bounded open subset $\Omega$ in $\R^N$, and this high generality makes its proof far from trivial. Indeed, taking for granted the existence of action ground states (resp. nodal action ground states) $u_\lambda$ at fixed $\lambda$, one may be tempted to try to characterize the set $M_p$ (resp. $M_p^{nod}$) by studying the map $\lambda\mapsto \|u_\lambda\|_{L^2(\Omega)}$. However, in principle such a map is not even well defined, as ground states need not be unique, and in any case its regularity is by no means guaranteed. Actually, in other contexts where the dependence on $\lambda$ of the mass of a curve of solutions $u_\lambda$ is relevant, it is common to assume from the very beginning to work with a $C^1$ curve of solutions (as one does e.g. in the standard stability theory for Hamiltonian systems \cite{GSS,SS,W}). For ground states on a general bounded domain, this level of regularity is too strong. Instead, the proof of Theorem \ref{thm:masses} does not require any regularity assumption of this sort and exploits a different perspective. Roughly, we will show that $M_p$ (resp. $M_p^{nod}$) is the range of the derivative of the action ground state level $\JJ_\Omega$ (resp. of $\JJ_\Omega^{nod}$) so that Theorem \ref{thm:masses} can also be seen as a Darboux-type theorem for $\JJ_\Omega$ and $\JJ_\Omega^{nod}$ (see Remark \ref{darboux}). 

Let us now discuss the impact of Theorem \ref{thm:masses} on questions 1)-2) above. With respect to 2), since even the existence of one nodal solution of \eqref{nlse} at prescribed mass $\mu$ is in general an open problem, we have the following result, that is an immediate corollary of Theorem \ref{thm:masses}.

\begin{theorem}
	\label{thm:exnod}
	Let $\Omega\subset \R^N$ be open and bounded, and $p\in(2,2^*)$. Then
	\begin{itemize}
		\item[(i)] if $p<2+\frac4N$, there exists a nodal solution of \eqref{nlse} for every $\mu>0$;
		\item[(ii)] if $p=2+\frac4N$, there exists a nodal solution of \eqref{nlse} for every $\mu\in(0,\mu_p^{nod})$, where $\mu_p^{nod}$ is as in Theorem \ref{thm:masses}(ii);
		\item[(iii)] if $p>2+\frac4N$, there exists a nodal solution of \eqref{nlse} for every $\mu\in(0,\mu_p^{nod}]$, where $\mu_p^{nod}$ is as in Theorem \ref{thm:masses}(iii).
	\end{itemize}
\end{theorem}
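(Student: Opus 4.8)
The plan is to read Theorem \ref{thm:exnod} off directly from the characterization of the nodal mass set $M_p^{nod}(\Omega)$ furnished by Theorem \ref{thm:masses}. The only conceptual bridge needed is the observation that a \emph{nodal action ground state carrying mass $\mu$ is exactly a normalized nodal solution of} \eqref{nlse} \emph{at that mass}. So my strategy is: (a) recall that a minimizer on the nodal Nehari set is a genuine sign-changing solution of the unconstrained problem \eqref{nlseNOMASS}; (b) note that prescribing its $L^2$-norm upgrades it to a solution of \eqref{nlse}; and (c) translate the three regimes of $M_p^{nod}(\Omega)$ into the three cases of the statement.

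First I would invoke the standard fact (part of the preliminary existence theory for nodal action ground states) that any $u$ realizing $J_\lambda(u,\Omega)=\JJ_\Omega^{nod}(\lambda)$ on $\NN_\lambda^{nod}(\Omega)$ is a free critical point of $J_\lambda(\cdot,\Omega)$, hence a weak solution of \eqref{nlseNOMASS} for that $\lambda$. This is the usual natural-constraint argument: since $u^+$ and $u^-$ have disjoint supports, the two-parameter fibering $(t,s)\mapsto J_\lambda(tu^++su^-,\Omega)$ splits as $J_\lambda(tu^+,\Omega)+J_\lambda(su^-,\Omega)$ and attains an interior maximum precisely where $tu^+,su^-\in\NN_\lambda(\Omega)$, so that a minimizer over $\NN_\lambda^{nod}(\Omega)$ is stationary also in the free directions and no spurious multiplier appears. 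Because $u^\pm\not\equiv0$ by definition of $\NN_\lambda^{nod}(\Omega)$, such a $u$ is automatically nodal. Matching masses is then immediate: if in addition $\|u\|_{L^2(\Omega)}^2=\mu$, then $u$ satisfies simultaneously the PDE, the boundary condition and the mass constraint in \eqref{nlse}, i.e.\ it is a normalized nodal solution at mass $\mu$. Therefore, for every $\mu\in M_p^{nod}(\Omega)$ as defined in \eqref{M}, problem \eqref{nlse} admits a nodal solution, and it suffices to substitute the value of $M_p^{nod}(\Omega)$ from Theorem \ref{thm:masses}: $(0,\infty)$ when $p<2+\frac4N$, a set containing $(0,\mu_p^{nod})$ when $p=2+\frac4N$, and $(0,\mu_p^{nod}]$ when $p>2+\frac4N$.

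Since all the genuine difficulty has been front-loaded into Theorem \ref{thm:masses}, I do not expect any real obstacle here; the result is an immediate corollary. The only points deserving a word of care are the identification of constrained minimizers with solutions of \eqref{nlseNOMASS} — which is routine Nehari-manifold theory already available from the preliminaries — and the faithful transcription of which masses belong to $M_p^{nod}(\Omega)$ in each regime, in particular keeping the conservative (possibly non-attained) endpoint $\mu_p^{nod}$ \emph{excluded} in the critical case (ii) while it is \emph{included} in the supercritical case (iii).
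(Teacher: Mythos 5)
Your proposal is correct and follows essentially the same route as the paper: the authors treat Theorem \ref{thm:exnod} as an immediate corollary of Theorem \ref{thm:masses}, using the fact (their Remark \ref{rem:NAGS_sol}, quoted from the literature rather than re-proved via the fibering argument you sketch) that nodal action ground states solve \eqref{nlseNOMASS} and hence, with mass $\mu\in M_p^{nod}(\Omega)$, are normalized nodal solutions of \eqref{nlse}. Your careful handling of the endpoint (excluded in the critical case, included in the supercritical case) matches the statement exactly.
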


\begin{remark}
	Clearly, a statement analogous to Theorem \ref{thm:exnod} can be given for normalized positive solutions of \eqref{nlse} too, with $\mu_p$ in place of $\mu_p^{nod}$. When $p\leq 2+\frac4N$, this does not extend the existence results for positive solutions already available in the literature. On the contrary, it is more relevant in the $L^2$-supercritical case. Indeed, in this regime, our approach provides a simple technique to exhibit normalized solutions for a whole interval of masses $(0,\mu_p]$ and, as far as we know, similar results have been previously obtained only through more technically demanding constructions (see e.g. \cite{PV}).
\end{remark}

Since Theorem \ref{thm:exnod} provides regimes of nonlinearities and masses for which the set of normalized nodal solutions is not empty, it is then natural to wonder whether one can identify the {\em least energy nodal solutions}, i.e. $u$ solving \eqref{nlse} such that $u^\pm\neq0$ and
\[
E(u,\Omega)=\inf\{E(v,\Omega)\,:\, v\text{ is a nodal solution of }\eqref{nlse}\text{ for some }\lambda\in\R\}.
\]
Actually, our method allows to answer in the affirmative to this question in the $L^2$-subcritical and $L^2$-critical regimes. To state this result, let 
\begin{equation}
\label{eq:muN}
\mu_N:= 2\inf_{u \in \NN_1(\R^N)} \left(\frac12 \|\nabla u\|_{L^2(\R^N)}^2 + \frac12 \| u\|_{L^2(\R^N)}^2 - \frac{1}{2+ 4/N}\|u\|_{L^{2+4/N}(\R^N)}^{2+4/N}\right).
\end{equation}
\begin{theorem}
	\label{thm:LENS}
	Let $\Omega\subset \R^N$ be open and bounded, and either 
	\begin{itemize}
		\item[(i)] $p<2+\frac4N$ and $\mu>0$; or
		\item[(ii)] $p=2+\frac4N$ and $\mu<2\mu_N$, where $\mu_N$ is the number in \eqref{eq:muN}.
	\end{itemize}
	Then there exists a least energy normalized nodal solution with mass $\mu$. Moreover, every least energy normalized nodal solution $u$ is a nodal action ground state in $\NN_\lambda^{nod}(\Omega)$, where $\lambda$ is the number associated to $u$ in \eqref{nlse}.
\end{theorem}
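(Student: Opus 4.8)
The plan is to reduce the minimization of the energy $E$ over all nodal solutions of \eqref{nlse} with mass $\mu$ to a one–dimensional minimization in the frequency $\lambda$, using that on the mass constraint the action and the energy differ only by a constant. If $v$ solves \eqref{nlse} at some frequency $\lambda$, then $E(v,\Omega)=J_\lambda(v,\Omega)-\frac\lambda2\mu$; moreover a nodal solution has $v^\pm\in\NN_\lambda(\Omega)$, hence $v\in\NN_\lambda^{nod}(\Omega)$ and $J_\lambda(v,\Omega)\ge\JJ_\Omega^{nod}(\lambda)$. Setting
\[
G(\lambda):=\JJ_\Omega^{nod}(\lambda)-\frac\lambda2\mu,
\]
every nodal normalized solution $v$ of mass $\mu$, with frequency $\lambda_v$, satisfies $E(v,\Omega)\ge G(\lambda_v)\ge\inf G$. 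Writing $\ell:=\inf\{E(v,\Omega):v\text{ is a nodal normalized solution of mass }\mu\}$, this gives for free the lower bound $\ell\ge\inf G$, and the whole point is to show that it is an equality realized by a nodal action ground state.

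Next I would show that $G$ attains its infimum at an interior point $\bar\lambda$ of the frequency interval $\interv$. Continuity of $G$ comes from that of $\JJ_\Omega^{nod}$. For the growth as $\lambda\to+\infty$ I would use two elementary comparisons: splitting a nodal competitor into its signed parts yields $\JJ_\Omega^{nod}(\lambda)\ge2\JJ_\Omega(\lambda)$, while extending functions by zero yields the domain monotonicity $\JJ_\Omega(\lambda)\ge\JJ_{\R^N}(\lambda)$, where $\JJ_{\R^N}(\lambda):=\inf_{\NN_\lambda(\R^N)}J_\lambda(u,\R^N)$. By scaling, $\JJ_{\R^N}(\lambda)=\JJ_{\R^N}(1)\,\lambda^{1+\frac2{p-2}-\frac N2}$; the exponent exceeds $1$ when $p<2+\frac4N$ and equals $1$ when $p=2+\frac4N$, in which case $\JJ_{\R^N}(\lambda)=\frac\lambda2\mu_N$ by \eqref{eq:muN}. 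Hence $G(\lambda)\ge2\JJ_{\R^N}(\lambda)-\frac\lambda2\mu\to+\infty$: automatically in the subcritical case, and in the critical case because $G(\lambda)\ge\frac\lambda2(2\mu_N-\mu)$, which diverges precisely because $\mu<2\mu_N$. Since moreover the mass of nodal action ground states tends to $0$ as $\lambda$ approaches the lower endpoint of $\interv$ (part of the content of Theorem \ref{thm:masses}), $G$ is decreasing there and its infimum is not attained at the endpoint. Thus the continuous, coercive $G$ has an interior global minimizer $\bar\lambda$.

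At $\bar\lambda$ I would invoke the differentiability of $\JJ_\Omega^{nod}$ with $(\JJ_\Omega^{nod})'(\lambda)=\frac12\|u_\lambda\|_{L^2(\Omega)}^2$ for a nodal action ground state $u_\lambda$, which is the engine of Theorem \ref{thm:masses}: the stationarity $G'(\bar\lambda)=0$ forces the ground state $\bar u:=u_{\bar\lambda}$ to have mass exactly $\mu$. Then $\bar u$ is a nodal normalized solution of mass $\mu$ with $E(\bar u,\Omega)=G(\bar\lambda)=\inf G$, and combined with the lower bound of the first step this gives $\ell=\inf G=E(\bar u,\Omega)$; hence $\bar u$ is a least energy normalized nodal solution, which proves existence. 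The characterization is then immediate: if $u$ is any least energy normalized nodal solution with frequency $\lambda_u$, the chain
\[
E(u,\Omega)=\ell=\inf G\le G(\lambda_u)\le E(u,\Omega)
\]
consists of equalities, so $J_{\lambda_u}(u,\Omega)=\JJ_\Omega^{nod}(\lambda_u)$, i.e.\ $u$ is a nodal action ground state in $\NN_{\lambda_u}^{nod}(\Omega)$.

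The main obstacle is the coercivity of $G$ in the $L^2$-critical regime, which is exactly where the hypothesis $\mu<2\mu_N$ is decisive: the sharp ingredient is the explicit value $\JJ_{\R^N}(\lambda)=\frac\lambda2\mu_N$ together with the comparisons $\JJ_\Omega^{nod}\ge2\JJ_\Omega\ge2\JJ_{\R^N}$, and if only $\mu\le2\mu_N$ were assumed the linear-in-$\lambda$ lower bound would degenerate and $G$ could fail to be coercive, consistent with the strict inequality in the statement. A secondary technical point is the rigorous use of the derivative/mass correspondence at the minimizer $\bar\lambda$, which rests on the Darboux-type differentiability of $\JJ_\Omega^{nod}$ established for Theorem \ref{thm:masses}.
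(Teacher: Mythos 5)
Your proposal is correct and follows essentially the same route as the paper: both minimize $f_\mu(\lambda)=\JJ_\Omega^{nod}(\lambda)-\frac\mu2\lambda$, use coercivity (where $\mu<2\mu_N$ enters in the critical case), apply the Darboux-type derivative/mass correspondence at the interior minimizer to produce a nodal action ground state of mass $\mu$, and read off both existence and the characterization from the chain $E(w,\Omega)\ge\JJ_\Omega^{nod}(\lambda_w)-\frac\mu2\lambda_w\ge\min f_\mu$. The only (harmless) deviation is that you derive the lower bound $\JJ_\Omega^{nod}(\lambda)\ge\lambda\mu_N$ from $\JJ_\Omega^{nod}\ge 2\JJ_\Omega\ge 2\JJ_{\R^N}$ and scaling on $\R^N$, whereas the paper obtains the same asymptotics in Proposition \ref{lem:J} via the energy ground-state level.
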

Theorem \ref{thm:LENS} says that, in the above regimes, least energy normalized nodal solutions are nodal action ground states. Observe that, at the critical power $p=2+\frac4N$, we are able to prove this fact only for masses strictly smaller than the threshold $2\mu_N \le \mu_p^{nod}$, although nodal solutions exist for every $\mu \le \mu_p^{nod}$ (Theorem \ref{thm:masses}). 

\begin{remark}
	Again, the analogue of Theorem \ref{thm:LENS} can be stated and proved for least energy normalized positive solutions. In this case, solutions with minimal energy exist for every mass when $p<2+\frac4N$, and for every mass strictly smaller than $\mu_N$ when $p=2+\frac4N$, and they are also action ground states in $\NN_\lambda(\Omega)$ for suitable $\lambda$. However, this is already well known, since in this range of $p$ and $\mu$ it is easily seen that $E$ admits global minimizers on $H_\mu^1(\Omega)$, and that such minimizers are also action ground states was recently proved in \cite[Theorem 1.3]{DST}.
\end{remark}

\begin{remark} Combining our results with those of \cite{NTV1}, we obtain a perhaps unexpected consequence. When $\Omega$ is a ball, $p = 2+4/N$, and for $\mu \in [\mu_N, 2\mu_N)$ there exist least energy normalized nodal solutions with mass $\mu$, by Theorem  \ref{thm:LENS}. By \cite[Theorem 1.5]{NTV1}, there are no positive solutions of mass $\mu$. This means that {\em least energy solutions of mass $\mu$ are nodal}.

\end{remark}

Theorem \ref{thm:LENS} (and its counterpart for positive solutions) gives no insight in the $L^2$-supercritical regime. However, it makes perfect sense to wonder whether normalized solutions with minimal energy are action ground states also when $p>2+\frac4N$. At present, we are not able to answer this question for any general bounded and open set $\Omega$ in $\R^N$, but we can partially solve the problem at least for star-shaped domains of $\R^N$. The next theorem summarizes our results in this direction, that provide our main contribution with respect to question 1) above.
\begin{theorem}
	\label{thm:super}
	Let $\Omega$ be bounded, open, smooth and star-shaped, $p\in\left(
	2+\frac4N,2^*\right)$, and $\mu_p,\mu_p^{nod}$ be as in Theorem \ref{thm:masses}. Then there exists a least energy normalized solution for every $\mu\leq\mu_p$, and there exists a least energy normalized nodal solution for every $\mu\leq\mu_p^{nod}$. Moreover, there exist $\overline{\mu}_p\leq\mu_p$ and $C_p>0$ such that every least energy normalized solution $u$ with mass $\mu\leq\overline{\mu}_p$ is an action ground state in $\NN_\lambda(\Omega)$, where $\lambda$ is the number associated to $u$ in \eqref{nlse} and satisfies $\lambda\leq C_p$. Analogously, there exist $\overline{\mu}_p^{nod}\leq\mu_p^{nod}$ and $C_p^{nod}>0$ such that every least energy normalized nodal solution $v$ with mass $\mu\leq\overline{\mu}_p^{nod}$ is a nodal action ground state in $\NN_{\lambda'}^{nod}(\Omega)$, where $\lambda'$ is the number associated to $v$ in \eqref{nlse} and satisfies $\lambda'\leq C_p^{nod}$.
\end{theorem}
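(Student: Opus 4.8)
The plan is to exploit the Pohozaev identity, available since $\Omega$ is smooth and star-shaped, to get a priori bounds, and then to use the compactness of the embedding $H_0^1(\Omega)\hookrightarrow L^p(\Omega)$ (valid for $p<2^*$ on the bounded set $\Omega$) to run a direct minimization. Throughout, $\lambda_\Omega$ denotes the first Dirichlet eigenvalue of $-\Delta$ on $\Omega$ and $\phi_1>0$ the associated eigenfunction, so that admissible frequencies satisfy $\lambda>-\lambda_\Omega$; I write $e(\mu)$ for the least energy level $\inf\{E(v,\Omega):v\text{ normalized solution of mass }\mu\}$ and $e^{nod}(\mu)$ for its nodal analogue, which are finite and realized on a nonempty set by Theorem \ref{thm:masses}(iii).

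\emph{Existence.} Take a minimizing sequence $u_n$ of normalized solutions of mass $\mu$ with frequencies $\lambda_n>-\lambda_\Omega$. For each $u_n$ I combine the Nehari identity $\|\nabla u_n\|^2+\lambda_n\mu=\|u_n\|_p^p$ with the Pohozaev identity; since $x\cdot\nu\ge0$ on $\partial\Omega$ the boundary term has a favourable sign, and eliminating $\|u_n\|_p^p$ yields $A_p\|\nabla u_n\|^2\ge B_p\lambda_n\mu$ with $A_p=\tfrac Np-\tfrac{N-2}2>0$ and $B_p=N(\tfrac12-\tfrac1p)>0$. Feeding this into the energy identity $E(u_n,\Omega)=(\tfrac12-\tfrac1p)\|\nabla u_n\|^2-\tfrac1p\lambda_n\mu$ gives $\kappa_p\|\nabla u_n\|^2\le E(u_n,\Omega)$ with $\kappa_p=\tfrac12-\tfrac1p-\tfrac{A_p}{pB_p}$, the crucial point being that $\kappa_p>0$ exactly when $p>2+\tfrac4N$. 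Hence $\|\nabla u_n\|$ is bounded, so is $\lambda_n$ (above by the Pohozaev inequality, below by $-\lambda_\Omega$), and the compact embedding gives $u_n\to u$ in $L^2\cap L^p$; thus $\|u\|_{L^2(\Omega)}^2=\mu$, $u\ne0$, passing to the limit in the equation shows $u$ is a normalized solution, and $\|\nabla u_n\|^2=\|u_n\|_p^p-\lambda_n\mu\to\|\nabla u\|^2$ upgrades the convergence to strong $H_0^1$, so $E(u,\Omega)=e(\mu)$. The nodal case is verbatim, the only extra care being to keep $u_n^\pm\not\equiv0$ in the limit: since $u_n^\pm\in\NN_{\lambda_n}(\Omega)$ and $\lambda_n$ stays in a compact subset of $(-\lambda_\Omega,+\infty)$, the masses of $u_n^\pm$ are bounded away from $0$; this is the main obstacle of the existence part.

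\emph{The bound $\lambda\le C_p$.} For a least energy solution $u$ the Pohozaev inequality reads $\lambda\le\tfrac{A_p}{B_p\mu}\|\nabla u\|^2$, while coercivity gives $\|\nabla u\|^2\le\kappa_p^{-1}e(\mu)$, so it suffices to prove $e(\mu)\le C\mu$. For this I use the small action ground state $w$ of mass $\mu$ provided by Theorem \ref{thm:masses}(iii): as $\mu\to0$ its frequency tends to $-\lambda_\Omega$ and $w\simeq t\phi_1$ with $t^2\simeq\mu/\|\phi_1\|_{L^2(\Omega)}^2$, whence $E(w,\Omega)=\tfrac{\lambda_\Omega}2\mu+o(\mu)$. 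Thus $e(\mu)\le E(w,\Omega)\le C\mu$ and $\lambda\le C_p$.

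\emph{Identification.} The same estimate in fact localises the frequency: from $E(u,\Omega)\ge\JJ_\Omega(\lambda)-\tfrac\lambda2\mu$ and $E(u,\Omega)=e(\mu)\le C\mu$ one gets $\JJ_\Omega(\lambda)\le(C+\tfrac\lambda2)\mu\to0$; since a Sobolev--Poincar\'e argument shows $\JJ_\Omega$ is bounded below by a positive constant on $[-\lambda_\Omega+\delta,\infty)$ for every $\delta>0$, this forces $\lambda\to-\lambda_\Omega$, while $\|\nabla u\|^2\le\kappa_p^{-1}e(\mu)\to0$. So, for $\mu$ small, the least energy solution is a small solution with frequency near $-\lambda_\Omega$. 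I then invoke the simplicity of $\lambda_\Omega=\lambda_1(\Omega)$: near the bifurcation point $(-\lambda_\Omega,0)$ the branch of small solutions is unique (Crandall--Rabinowitz) and consists of positive action ground states, so any small solution with frequency close to $-\lambda_\Omega$ lies on it and satisfies $J_\lambda(u,\Omega)=\JJ_\Omega(\lambda)$. Choosing $\overline\mu_p$ small enough to reach this regime gives the claim; equivalently one argues by contradiction along $\mu_n\to0$, rescaling $u_n/\|u_n\|$ to $\phi_1$ and using nondegeneracy. For the nodal statement the scheme is identical, with $\JJ_\Omega$, $-\lambda_\Omega$ and $\phi_1$ replaced by $\JJ_\Omega^{nod}$, the appropriate nodal threshold, and the relevant sign-changing profile; the delicate point, where most of the work concentrates, is the asymptotics of small nodal solutions, whose two bumps behave like small ground states on the regions of an optimal partition of $\Omega$, and showing that these are nodal action ground states. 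This identification step is the principal obstacle of the whole theorem.
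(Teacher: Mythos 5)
Your existence argument and your derivation of the frequency bound follow essentially the same mechanism as the paper: the Poho\v{z}aev identity combined with the Nehari identity gives coercivity of $E$ over the set of solutions precisely when $p>2+\frac4N$, and an upper bound $e(\mu)\le C\mu$, obtained from an action ground state whose frequency is near the bottom of the admissible range, then caps the Lagrange multiplier. Two points in this part need repair. First, the frequencies along a minimizing sequence are \emph{not} bounded below by $-\lambda_1$ a priori (nodal solutions exist with $\lambda\le-\lambda_1$); the correct lower bound comes from $E(u_n,\Omega)=J_{\lambda_n}(u_n,\Omega)-\frac{\lambda_n}{2}\mu\ge-\frac{\lambda_n}{2}\mu$ together with the boundedness of $E(u_n,\Omega)$. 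Second, and relatedly, your claim that the limit of a nodal minimizing sequence stays nodal because ``$\lambda_n$ stays in a compact subset of $(-\lambda_\Omega,+\infty)$'' is unjustified. The paper argues by contradiction instead: if the limit $u$ were nonnegative, being a nonzero solution it would force $\lambda>-\lambda_1$, and then $\liminf_n\kappa\|u_n^{\pm}\|_p^p\ge\liminf_n\JJ_\Omega(\lambda_n)=\JJ_\Omega(\lambda)>0$, contradicting $u^-\equiv0$.

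The genuine gap is in the identification step. You reduce it to showing that small-mass least energy solutions lie on the Crandall--Rabinowitz branch bifurcating from $(-\lambda_1,0)$, and you yourself flag the nodal analogue (``two bumps on an optimal partition'') as the principal obstacle --- but this program is not carried out, and in the nodal case it faces real obstructions: $\lambda_2$ need not be simple, so there is no clean bifurcating branch, and no local uniqueness statement for small nodal solutions is available. None of this machinery is needed. The paper's identification is a soft comparison: once the frequency of every least energy normalized nodal solution of mass $\mu\le\overline{\mu}_p^{nod}$ is known to lie below $\overline{\lambda}_p^{nod}$ (which is exactly your Poho\v{z}aev bound), and once the function $f_\mu(\lambda)=\JJ_\Omega^{nod}(\lambda)-\frac{\mu}{2}\lambda$ is known to attain an interior minimum at some $\widetilde\lambda$ where every nodal action ground state has mass exactly $\mu$ (this is the content of Lemma \ref{minimum}), one writes, for any competitor $w$ with multiplier $\lambda_w$,
\[
E(w,\Omega)=J_{\lambda_w}(w,\Omega)-\frac{\mu}{2}\lambda_w\ \ge\ \JJ_\Omega^{nod}(\lambda_w)-\frac{\mu}{2}\lambda_w\ \ge\ f_\mu(\widetilde\lambda)=E(\widetilde u,\Omega),
\]
where $\widetilde u$ is a nodal action ground state of mass $\mu$ at $\widetilde\lambda$. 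If $w$ is a least energy normalized nodal solution, this chain collapses to equalities, so in particular $J_{\lambda_w}(w,\Omega)=\JJ_\Omega^{nod}(\lambda_w)$, i.e.\ $w$ is a nodal action ground state. No asymptotics of small solutions, no uniqueness, and no bifurcation analysis enter. You should replace your final step with this comparison; as written, your proof of the identification claim is incomplete.
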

Note that Theorem \ref{thm:super} not only shows that, in certain regimes of masses, least energy solutions are again action ground states, but it also proves that the corresponding frequency $\lambda$ of such ground states is bounded from above uniformly in $\lambda$. In fact, in the $L^2$-supercritical regime it is not difficult to show that action ground states have small masses both when $\lambda$ is close to $-\lambda_1$  and when $\lambda$ is large. Theorem \ref{thm:super} suggests that, even though they have the same masses, small frequency ground states are energetically convenient. This kind of property had been observed before when $\Omega$ is a ball (see \cite[Theorem 1.7 and Remark 6.4]{NTV1}), and to some extent one can interpret Theorem \ref{thm:super} as a first step towards a proof of a result of this sort for general domains.

It is an open problem to understand whether the content of Theorem \ref{thm:super} remains true when $\Omega$ is not star-shaped. Note that, in the proof of Theorem \ref{thm:super} reported in Section \ref{sec:super} below, the star-shapedness assumption plays a role not only to show that action ground states are least energy normalized solutions, but also to prove that a least energy normalized solution actually exists.

To conclude, we wish to point out that the argument developed in the present paper is not limited to NLS equations \eqref{nlse} with a pure power nonlinearity and homogeneous Dirichlet conditions at the boundary. On the contrary, since it can be  generalized to other boundary conditions or nonlinearities, the paper actually provides a new approach to the study of normalized solutions of NLS equations, that one can try to exploit whenever a suitable Nehari manifold associated to the problem under exam is available. Moreover, this work can be seen as a further step in the investigation of the relation between the action and the energy approaches to the search of solutions of NLS equations, thus extending the first analyses in this direction recently started in \cite{DST,JL}. 

\smallskip
The remainder of the paper is organized as follows. Section \ref{sec:prel} recalls some known facts and proves preliminary existence results for nodal action ground states. Section \ref{sec:lev} provides a detailed analysis of the nodal action ground state level $\JJ_\Omega^{nod}$, whereas Section \ref{sec:masses} gives the proof of Theorem \ref{thm:masses} on the masses of action ground states. Finally, Section \ref{sec:super} completes the proof of the main results of the paper, namely Theorems \ref{thm:exnod}--\ref{thm:LENS}--\ref{thm:super}.

\smallskip
\noindent {\bf Notation.} Throughout, we will use shorter notations for norms as $\|u\|_q$, avoiding to write the domain of integration whenever it is clear by the context.

\section{Existence results for nodal action ground states}
\label{sec:prel}

This section discusses existence and non-existence of nodal action ground states on open bounded subsets $\Omega$ of $\R^N$. We recall that, if $\Omega$ is smooth, existence of such states has been proved in \cite[Theorem 1.1]{BW}. Hence, here we limit ourselves to prove some basic estimates that allow us to extend this already known result to general open and bounded sets (without regularity assumptions).

We start by recalling the picture for action ground states.
The following result concerning the action was proved in \cite[Theorem 1.5, Lemma 2.4 and Remark 2.5]{DST}. Here, $\lambda_1$ denotes the first eigenvalue of $-\Delta$ with homogeneous Dirichlet conditions at the boundary of $\Omega$.

\begin{proposition}
\label{prop_J}
For every $p\in (2,2^*)$, the following properties hold.
\begin{itemize}
\item[(i)] For every $\lambda \le -\lambda_1$, $\JJ_\Omega(\lambda) = 0$ and action ground states in $\NN_{\lambda}(\Omega)$ do not exist.
\item[(ii)] For every $\lambda > -\lambda_1$, $\JJ_\Omega(\lambda)>0$ and action ground states in $\NN_{\lambda}(\Omega)$  exist.
\item[(iii)] The function $\JJ_\Omega:\R \to \R$ is locally Lipschitz continuous and strictly increasing on $[-\lambda_1, +\infty)$.
\item[(iv)] Letting $Q_p(\lambda) = \left\{\|u\|_2^2 : u\in \NN_{\lambda}(\Omega) \text{ and } J_\lambda(u,\Omega)=\JJ_\Omega(\lambda) \right\}$, there results
\begin{equation*}
\lim_{\varepsilon\to0^+} \frac{\JJ_\Omega(\lambda+\varepsilon)-\JJ_\Omega(\lambda)}{\varepsilon} = \frac12 \inf Q_p(\lambda)
\le \frac12 \sup Q_p(\lambda) = \lim_{\varepsilon\to0^-} \frac{\JJ_\Omega(\lambda+\varepsilon)-\JJ_\Omega(\lambda)}{\varepsilon},
\end{equation*}
Moreover, for every $\lambda$ outside an at most countable set, all action ground states have the same mass (i.e., $Q_p(\lambda)$ is a singleton).
\end{itemize}
\end{proposition}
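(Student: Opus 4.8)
The plan is to reduce everything to the scaling structure of the action on the Nehari manifold and then to study an auxiliary concave quantity. The starting point is that on $\NN_\lambda(\Omega)$ one has $\|\nabla u\|_2^2+\lambda\|u\|_2^2=\|u\|_p^p$, so
\[
J_\lambda(u,\Omega)=\Big(\tfrac12-\tfrac1p\Big)\|u\|_p^p=\frac{p-2}{2p}\|u\|_p^p\ge0,
\]
with strict positivity whenever $u\neq0$. Writing $Q_\lambda(u):=\|\nabla u\|_2^2+\lambda\|u\|_2^2$, every $u$ with $Q_\lambda(u)>0$ has a unique Nehari rescaling $t_\lambda(u)u$ with $t_\lambda(u)=(Q_\lambda(u)/\|u\|_p^p)^{1/(p-2)}$, and a direct computation gives the reduced expression
\[
\JJ_\Omega(\lambda)=\frac{p-2}{2p}\inf_{u\neq0,\;Q_\lambda(u)>0}\Big(\frac{Q_\lambda(u)}{\|u\|_p^2}\Big)^{\frac{p}{p-2}}=\frac{p-2}{2p}\,S(\lambda)^{\frac{p}{p-2}},\qquad S(\lambda):=\inf_{u\neq0}\frac{Q_\lambda(u)}{\|u\|_p^2}.
\]
I will use this reduction throughout.

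For (i)--(ii) I rely on the Poincar\'e inequality $\|\nabla u\|_2^2\ge\lambda_1\|u\|_2^2$, giving $Q_\lambda(u)\ge(\lambda_1+\lambda)\|u\|_2^2$. When $\lambda\le-\lambda_1$ I test the reduced quantity on $u_s=\varphi_1+s\varphi_2$ (first two Dirichlet eigenfunctions): since $Q_\lambda(\varphi_1)\le0<Q_\lambda(\varphi_2)$ I can make $Q_\lambda(u_s)\to0^+$ while $\|u_s\|_p$ stays bounded away from $0$, whence $\JJ_\Omega(\lambda)=0$; a minimizer would force $\|u\|_p^p=0$, so none exists. When $\lambda>-\lambda_1$, $Q_\lambda$ is an equivalent norm on $H_0^1(\Omega)$, and Sobolev's inequality applied on the constraint $Q_\lambda(u)=\|u\|_p^p$ yields a uniform lower bound $Q_\lambda(u)\ge\delta>0$, so $\JJ_\Omega(\lambda)>0$. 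Existence follows by the direct method: a minimizing sequence is bounded in $H_0^1(\Omega)$, converges weakly and (by Rellich--Kondrachov, since $p<2^*$) strongly in $L^p(\Omega)$, and weak lower semicontinuity of $Q_\lambda$ together with the rescaling $t_\lambda(\cdot)$ forces the weak limit onto $\NN_\lambda(\Omega)$ with no loss of energy.

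For (iii), strict monotonicity comes from a one-sided projection: given $-\lambda_1\le\lambda<\lambda'$ and a ground state $u$ at $\lambda'$, the identity $Q_\lambda(u)=\|u\|_p^p-(\lambda'-\lambda)\|u\|_2^2<\|u\|_p^p$ gives a Nehari rescaling with factor $<1$, hence $\JJ_\Omega(\lambda)\le J_\lambda(t_\lambda(u)u)<\JJ_\Omega(\lambda')$. Local Lipschitz continuity uses the opposite projection together with a uniform mass bound: on a compact interval $[\alpha,\beta]\subset(-\lambda_1,\infty)$ one bounds $\JJ_\Omega$ from above by a fixed test function, and then $\|\nabla u_\lambda\|_2^2\lesssim Q_\lambda(u_\lambda)=\|u_\lambda\|_p^p=\tfrac{2p}{p-2}\JJ_\Omega(\lambda)$ gives $\sup_{[\alpha,\beta]}\|u_\lambda\|_2^2<\infty$; feeding this into the projection estimate and using convexity of $t\mapsto t^{p/(p-2)}$ yields $0\le\JJ_\Omega(\lambda')-\JJ_\Omega(\lambda)\le C(\lambda'-\lambda)$.

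Part (iv) is the heart. The two projection estimates, expanded to first order via convexity of $t\mapsto t^{p/(p-2)}$, already give $\limsup_{\eps\to0^+}\eps^{-1}(\JJ_\Omega(\lambda+\eps)-\JJ_\Omega(\lambda))\le\tfrac12\|u_\lambda\|_2^2$ for \emph{every} ground state $u_\lambda$ at $\lambda$, hence $\le\tfrac12\inf Q_p(\lambda)$, and symmetrically a bound $\ge\tfrac12\sup Q_p(\lambda)$ for the left quotient. The matching reverse inequalities require a \textbf{compactness statement for ground states as $\lambda$ varies}, which I expect to be the main obstacle: choosing ground states $v_\eps$ at $\lambda+\eps$, I must show they are bounded, converge up to a subsequence strongly in $H_0^1(\Omega)$ to a ground state at $\lambda$, and that their masses converge. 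Boundedness is the uniform mass bound above; the limit lands on $\NN_\lambda(\Omega)$ by the weak-lower-semicontinuity-plus-rescaling argument of (ii), using continuity of $\JJ_\Omega$ from (iii); and convergence of $Q_\lambda$ along the sequence upgrades weak to strong convergence, pinning the limiting mass inside $Q_p(\lambda)$. A contradiction argument then closes the one-sided derivative formulas. Finally, for the singleton claim I use that $S(\lambda)=\inf_u Q_\lambda(u)/\|u\|_p^2$ is an infimum of functions \emph{affine} in $\lambda$, hence \emph{concave} on $(-\lambda_1,\infty)$; a concave function is differentiable off an at most countable set, and since $\JJ_\Omega=\tfrac{p-2}{2p}S^{p/(p-2)}$ with $S>0$, the same holds for $\JJ_\Omega$. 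Wherever $\JJ_\Omega$ is differentiable the two one-sided derivatives coincide, forcing $\inf Q_p(\lambda)=\sup Q_p(\lambda)$, i.e. $Q_p(\lambda)$ is a singleton.
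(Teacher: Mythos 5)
Your proposal is essentially correct, but note that the paper does not prove this proposition at all: it is quoted from [DST, Theorem 1.5, Lemma 2.4 and Remark 2.5], so the only in-paper comparison is with the parallel nodal development of Sections 2--3. There the authors obtain monotonicity and the one-sided derivative bounds exactly as you do, by Nehari projection and first-order expansion of $n_\lambda(u)^p$; but they derive local Lipschitz continuity from the $\limsup$ bound via a differential-inequality/minimum-point argument, and they get the ``same mass for a.e.\ $\lambda$'' conclusion only through Rademacher's theorem. Your route differs in two useful ways. First, the reduction $\JJ_\Omega(\lambda)=\kappa\,S(\lambda)^{p/(p-2)}$ with $S(\lambda)=\inf_u Q_\lambda(u)/\|u\|_p^2$ an infimum of affine functions of $\lambda$ makes $S$ concave, which yields differentiability outside an \emph{at most countable} set --- exactly the stronger claim in (iv) that the paper's Lipschitz-plus-Rademacher argument cannot reach; this is presumably why the nodal analogue (Proposition \ref{deraction}) only claims ``almost every $\lambda$'', since $\JJ_\Omega^{nod}$ admits no such single Rayleigh-quotient reduction. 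Second, your upgrade of the one-sided inequalities to genuine limits via compactness of ground states along $\lambda+\eps\to\lambda$ is the correct missing ingredient and is precisely the signed counterpart of Proposition \ref{convergence_NGS}; your sketch of it (uniform bounds, weak limit forced onto $\NN_\lambda(\Omega)$ by lower semicontinuity plus rescaling, strong convergence from convergence of the levels) is sound.

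Two small repairs are needed. In (i), for $\lambda\le-\lambda_2$ your test family $u_s=\varphi_1+s\varphi_2$ fails, since then $Q_\lambda(\varphi_2)=(\lambda_2+\lambda)\|\varphi_2\|_2^2\le0$ as well; replace $\varphi_2$ by an eigenfunction $\varphi_k$ with $\lambda_k>-\lambda$, for which the interpolation argument goes through verbatim. In (iii), restricting to compact intervals $[\alpha,\beta]\subset(-\lambda_1,\infty)$ leaves out the transition point $-\lambda_1$, where the ratio $\|u_\lambda\|_2^2/\|u_\lambda\|_p^p$ entering your first-order expansion blows up; Lipschitz continuity there follows instead from the upper bound $\JJ_\Omega(\lambda)\le C(\lambda+\lambda_1)^{p/(p-2)}=o(\lambda+\lambda_1)$ obtained by testing with $\varphi_1$ (or directly from concavity of $S$ together with $S(-\lambda_1)=0$).
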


\begin{remark}
\label{no_pos_sol}
It is well known that the threshold $-\lambda_1$ appearing in the preceding result is also a threshold for the existence of constant sign solutions. Precisely, if $\lambda \le -\lambda_1$ then \eqref{nlseNOMASS} has no nonzero solutions $u$ with $u \ge 0$.
It is also well known that  if $\lambda > -\lambda_1$ and $u$ is a nonzero solution with $u \ge 0$, then $u > 0$ in $\Omega$.
\end{remark}

We now establish a similar picture for nodal action ground states. In this setting, a major role is played by the second eigenvalue $\lambda_2$ of $-\Delta$ with homogeneous Dirichlet conditions at the boundary. Requiring only $\lambda > -\lambda_2$ poses some problems that one does not encounter in the study of signed ground states. For instance, the inequality $\|\nabla u\|_2^2+\lambda\|u\|_2^2> 0$ does not hold for every $u$ and checking it  for a given $u$ requires some care. Also, we will have to estimate the norms of the positive and negative parts of functions separately, something that is not directly readable from the functional $J_\lambda$. For these reasons we proceed with single statements instead of collecting them all together as in Proposition \ref{prop_J}.

We recall that, given $\lambda\in \mathbb R$ and $u\in H_0^1(\Omega)$ such that $\|\nabla u\|_2^2+\lambda\|u\|_2^2> 0$, defining
\begin{equation*}
    n_\lambda(u):=\left(\f{\|\nabla u\|_2^2+\lambda\|u\|_2^2}{\|u\|_p^p}\right)^{\f 1{p-2}},
\end{equation*}
we have $n_\lambda(u)u\in\NL(\Omega)$. We also recall that if $u \in \NN_{\lambda}(\Omega)$, then
\begin{equation}
    \label{form_J}
    J_{\lambda}(u,\Omega)
    = \kappa \| u \|_p^p
    = \kappa \Bigl( \| \nabla u \|_2^2 + \lambda \| u \|_2^2 \Bigr), \qquad \kappa = \frac12 - \frac1p,
\end{equation}
a fact we will tacitly use in the proofs.
\begin{remark}
	\label{rem:NAGS_sol}
	The fact that nodal action ground states in $\NN_\lambda^{nod}(\Omega)$, when they exist, are solutions of problem \eqref{nlseNOMASS} is well known (see e.g. \cite[Proposition 3.1]{BWW}).
\end{remark}

The next proposition is the nodal analogue of Proposition \ref{prop_J}$(i)$.

\begin{proposition}
\label{properties_NGS}
For every $p\in (2,2^*)$ and every $\lambda \le -\lambda_2$, $\JJ_\Omega^{nod}(\lambda) = 0$ and nodal action ground states in $\NN_\lambda^{nod}(\Omega)$ do not exist.
\end{proposition}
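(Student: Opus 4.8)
The plan is to reduce this nodal non-existence statement to the already-known signed result, Proposition \ref{prop_J}(i), by localizing to the two sign regions of a second eigenfunction. The key observation is that $\lambda\le-\lambda_2$ is exactly the condition that makes both sign regions ``$\lambda$-subcritical from below'' in the sense of Proposition \ref{prop_J}(i). The non-existence part will then follow almost immediately, so the real content is to prove $\JJ_\Omega^{nod}(\lambda)=0$.

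First I would record the splitting identity: since $w^+=\max(w,0)$ and $w^-=\min(w,0)$ have (essentially) disjoint supports, one has $\nabla w^+\cdot\nabla w^-=0$ and $w^+w^-=0$ a.e., so $\|\nabla w\|_2^2$, $\|w\|_2^2$ and $\|w\|_p^p$ all split additively, giving $J_\lambda(w,\Omega)=J_\lambda(w^+,\Omega)+J_\lambda(w^-,\Omega)$. For $w\in\NN_\lambda^{nod}(\Omega)$ each part lies in $\NN_\lambda(\Omega)$, so by \eqref{form_J} we get $J_\lambda(w^\pm,\Omega)=\kappa\|w^\pm\|_p^p>0$, whence $J_\lambda(w,\Omega)>0$; this already yields $\JJ_\Omega^{nod}(\lambda)\ge0$ and rules out a minimizer at level $0$, since $\|w^+\|_p^p+\|w^-\|_p^p=0$ is impossible for $w^\pm\neq0$. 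It therefore remains to construct nodal competitors with action arbitrarily close to $0$.

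To build them I would use a second eigenfunction $\phi_2$, associated with $\lambda_2$. By interior elliptic regularity $\phi_2\in C(\Omega)$, so the sign regions $\Omega^\pm:=\{x\in\Omega:\pm\phi_2(x)>0\}$ are open, disjoint, and nonempty (as $\phi_2$, being $L^2$-orthogonal to the positive $\phi_1$, changes sign). Testing the weak equation $-\Delta\phi_2=\lambda_2\phi_2$ against $\phi_2^\pm$ and using $\nabla\phi_2\cdot\nabla\phi_2^\pm=|\nabla\phi_2^\pm|^2$ a.e. gives $\|\nabla\phi_2^\pm\|_2^2=\lambda_2\|\phi_2^\pm\|_2^2$. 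Since the positive and negative parts of an $H_0^1$ function lie in $H_0^1$ of the corresponding sign region, the functions $\phi_2^\pm$ are admissible test functions for $\lambda_1(\Omega^\pm)$, yielding $\lambda_1(\Omega^\pm)\le\lambda_2\le-\lambda$, i.e. $\lambda\le-\lambda_1(\Omega^\pm)$. Applying Proposition \ref{prop_J}(i) on each subdomain then gives $\JJ_{\Omega^\pm}(\lambda)=0$. Consequently, for every $\delta>0$ I can pick $v_\pm\in\NN_\lambda(\Omega^\pm)$ with $v_\pm\ge0$ (replacing $v_\pm$ by $|v_\pm|$, which has the same norms) and $J_\lambda(v_\pm,\Omega^\pm)<\delta/2$; extending by zero, they lie in $\NN_\lambda(\Omega)$ with disjoint supports, so $w:=v_+-v_-$ has $w^+=v_+$, $w^-=-v_-$, belongs to $\NN_\lambda^{nod}(\Omega)$, and by the splitting identity $J_\lambda(w,\Omega)=J_\lambda(v_+,\Omega^+)+J_\lambda(v_-,\Omega^-)<\delta$. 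Letting $\delta\to0$ gives $\JJ_\Omega^{nod}(\lambda)=0$.

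I expect the main obstacle to be the passage to the subdomains for a general, possibly non-smooth, open bounded $\Omega$: specifically, justifying that $\Omega^\pm$ are open (which rests on interior regularity of $\phi_2$, independent of the smoothness of $\partial\Omega$) and that $\phi_2^\pm\in H_0^1(\Omega^\pm)$ with $\|\nabla\phi_2^\pm\|_2^2=\lambda_2\|\phi_2^\pm\|_2^2$, so that Proposition \ref{prop_J}(i) may legitimately be invoked on $\Omega^\pm$ and delivers the bound $\lambda_1(\Omega^\pm)\le\lambda_2$ that is \emph{tight} precisely at the threshold $\lambda=-\lambda_2$. Once this localization is secured, the gluing argument and the non-existence conclusion are routine.
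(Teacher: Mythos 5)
Your proposal is correct and follows essentially the same route as the paper: localize to the nodal domains of a second eigenfunction, use the Rayleigh-quotient bound $\lambda_1(\Omega^\pm)\le\lambda_2$ to invoke Proposition \ref{prop_J}(i) on each piece, glue sign-definite competitors with small action to get $\JJ_\Omega^{nod}(\lambda)=0$, and rule out a minimizer via $J_\lambda(u,\Omega)=\kappa\|u\|_p^p>0$ on $\NN_\lambda^{nod}(\Omega)$. The paper is terser (it cites the classical identity $\lambda_2(\Omega)=\lambda_1(\Omega^\pm)$ and writes the gluing as a one-line inequality), but the content is the same.
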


\begin{proof} Fix any $\lambda \le -\lambda_2$ and let $\varphi_2 \in H^1_0(\Omega)$
be an eigenfunction corresponding to $\lambda_2=\lambda_2(\Omega)$.
Denoting by $\Omega^+ := \hbox{supp} (\varphi_2^+)$ and $\Omega^- :=\hbox{supp}  (\varphi_2^-)$,
there results, as is well known,  $\lambda_2(\Omega) = \lambda_1(\Omega^+) = \lambda_1(\Omega^-)$.
Then, by assumption, $\lambda \le -\lambda_1(\Omega^+) = -\lambda_1(\Omega^-)$.
By Proposition \ref{prop_J}$(i)$ we deduce that
\[
\JJ_\Omega^{nod}(\lambda) \le \inf_{v \in \NN_{\lambda}(\Omega^+)} J_{\lambda}(v,\Omega^+) + \inf_{v \in \NN_{\lambda}(\Omega^-)} J_{\lambda}(v,\Omega^-) = 0.
\]
Since $\JJn(\lambda)$ is always nonnegative by \eqref{form_J},  we see that $\JJ^{nod}_\Omega(\lambda)=0$. Furthermore, again by \eqref{form_J}, if $u$ were a nodal ground state in $\NN_\lambda^{nod}(\Omega)$, we would have $\| u \|_p=0$, which is impossible as $0\not\in\NN_{\lambda}^{nod}(\Omega)$.
\end{proof}

\bigskip
In the following, let
\begin{equation*}
 C(p) := \inf_{u \in H^1_0(\Omega)\setminus\{0\}} \frac{\| \nabla u \|_2}{\| u \|_p}.
\end{equation*}
\begin{proposition}
\label{prelpropJ} \mbox{}
For every $p\in (2,2^*)$, there exist positive constants $C_1,C_2$ such that for every $\lambda \ge  -\lambda_2$,
\begin{align}
 \label{up}
&\JJn(\lambda) \le C_1  (\lambda + \lambda_2)^{\frac{p}{p-2}}\\
\label{down}
 &\JJn(\lambda) \ge C_2 \min\left(1,  \frac{\lambda + \lambda_2}{\lambda_2}\right)^{\frac{p}{p-2}} 
\end{align}
\end{proposition}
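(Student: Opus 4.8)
The plan is to prove the two bounds \eqref{up} and \eqref{down} separately, both as consequences of the scaling structure of $J_\lambda$ on the nodal Nehari set together with the eigenfunction construction already used in the proof of Proposition \ref{properties_NGS}.

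For the upper bound \eqref{up}, the idea is to test $\JJn(\lambda)$ with a fixed competitor built from the second eigenfunction $\varphi_2$. Writing $\Omega^\pm=\supp(\varphi_2^\pm)$, I would take as a trial nodal function $u=n_\lambda(\varphi_2^+)\varphi_2^+ - n_\lambda(\varphi_2^-)\varphi_2^-$, so that both parts are projected onto the Nehari manifold and $u\in\NN_\lambda^{nod}(\Omega)$. Using the identity \eqref{form_J} on each nodal domain, one has
\begin{equation*}
\JJn(\lambda)\le J_\lambda(n_\lambda(\varphi_2^+)\varphi_2^+,\Omega^+)+J_\lambda(n_\lambda(\varphi_2^-)\varphi_2^-,\Omega^-)
=\kappa\Bigl(n_\lambda(\varphi_2^+)^p\|\varphi_2^+\|_p^p+n_\lambda(\varphi_2^-)^p\|\varphi_2^-\|_p^p\Bigr).
\end{equation*}
Since $-\Delta\varphi_2=\lambda_2\varphi_2$ means $\|\nabla\varphi_2^\pm\|_2^2=\lambda_2\|\varphi_2^\pm\|_2^2$, the quantity $\|\nabla\varphi_2^\pm\|_2^2+\lambda\|\varphi_2^\pm\|_2^2=(\lambda+\lambda_2)\|\varphi_2^\pm\|_2^2$ is exactly proportional to $\lambda+\lambda_2$. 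Plugging this into the definition of $n_\lambda$ and recalling that $n_\lambda(u)^p\|u\|_p^p=(n_\lambda(u)u)$ evaluated through \eqref{form_J} scales like $\bigl(\|\nabla\cdot\|_2^2+\lambda\|\cdot\|_2^2\bigr)^{p/(p-2)}/\|\cdot\|_p^{2p/(p-2)}$, each term is a fixed multiple of $(\lambda+\lambda_2)^{p/(p-2)}$. Collecting constants depending only on $\varphi_2$ and $p$ yields \eqref{up}.

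For the lower bound \eqref{down}, I would argue from the definition of $\JJn(\lambda)$: any $u\in\NN_\lambda^{nod}(\Omega)$ has $u^\pm\in\NN_\lambda(\Omega)$, so $\|\nabla u^\pm\|_2^2+\lambda\|u^\pm\|_2^2=\|u^\pm\|_p^p>0$, and each part supports a nontrivial Gagliardo–Nirenberg/Sobolev-type estimate through the embedding constant $C(p)$. The strategy is to show $J_\lambda(u^\pm,\Omega)\ge C_2\min(1,(\lambda+\lambda_2)/\lambda_2)^{p/(p-2)}$ for at least one sign, hence for the sum. The Nehari identity combined with $\|\nabla v\|_2\ge C(p)\|v\|_p$ gives the standard bound $J_\lambda(v,\Omega)\ge\kappa\,C(p)^{2p/(p-2)}$ whenever $\lambda\ge0$, producing the constant regime $\min(\cdot)=1$. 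When $-\lambda_2\le\lambda<0$ the coercivity of $\|\nabla\cdot\|_2^2+\lambda\|\cdot\|_2^2$ degenerates, and here the factor $(\lambda+\lambda_2)/\lambda_2$ must be extracted: the point is that on the nodal set the quadratic form controlled by $\lambda_2$ keeps $\|\nabla u^\pm\|_2^2+\lambda\|u^\pm\|_2^2$ bounded below by a positive multiple of $(\lambda+\lambda_2)$ times a Sobolev expression, which after the Nehari scaling gives the exponent $p/(p-2)$.

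The main obstacle I expect is precisely the negative-$\lambda$ part of the lower bound: unlike the signed case where $\lambda>-\lambda_1$ gives coercivity on all of $H_0^1(\Omega)$, for nodal functions one only controls $u^+$ and $u^-$ separately and the relevant spectral gap is $\lambda_2$ rather than $\lambda_1$. Making the dependence on $(\lambda+\lambda_2)/\lambda_2$ quantitative and uniform requires carefully combining the variational characterization of $\lambda_2$ with the fact that the two nodal parts have disjoint supports, so that no cross terms spoil the estimate. The homogeneity exponent $p/(p-2)$ then emerges naturally from the Nehari scaling, but keeping the constants $C_1,C_2$ independent of $\lambda$ across the whole range $\lambda\ge-\lambda_2$ is the delicate bookkeeping step.
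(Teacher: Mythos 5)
Your proof of \eqref{up} is essentially the paper's: project $\varphi_2^\pm$ onto the Nehari manifold, use $\|\nabla\varphi_2^\pm\|_2^2=\lambda_2\|\varphi_2^\pm\|_2^2$, and read off the homogeneity $(\lambda+\lambda_2)^{p/(p-2)}$ from the definition of $n_\lambda$. That part is correct (modulo the sign convention: with the paper's $u^-=\min(u,0)$ the competitor is $n_\lambda(\varphi_2^+)\varphi_2^+ + n_\lambda(\varphi_2^-)\varphi_2^-$). The gap is in \eqref{down}. Your pivotal claim for $\lambda\in(-\lambda_2,0)$ --- that the spectral gap keeps $\|\nabla u^\pm\|_2^2+\lambda\|u^\pm\|_2^2$ bounded below by a multiple of $(\lambda+\lambda_2)/\lambda_2$ times $\|\nabla u^\pm\|_2^2$ --- is, for an individual signed part, equivalent to $\|\nabla u^\pm\|_2^2\ge\lambda_2\|u^\pm\|_2^2$, and there is no reason either nodal part of a competitor should satisfy this: $u^+$ can have Rayleigh quotient arbitrarily close to $\lambda_1<\lambda_2$ (think of a positive part approximating the first eigenfunction), in which case your estimate degenerates precisely in the delicate range $\lambda$ near $-\lambda_2$. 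Your parenthetical ``for at least one sign'' points at the correct repair --- since $u^+$ and $u^-$ have disjoint supports, the min--max characterization of $\lambda_2$ applied to $\lspan(u^+,u^-)$ shows that at least one of the two parts has Rayleigh quotient $\ge\lambda_2$, and running the Sobolev/Nehari computation on that part alone (discarding the other, whose action is nonnegative) does give \eqref{down} --- but this two-dimensional min--max lemma is the entire content of the negative-$\lambda$ case, and you neither state nor prove it; as written, the proposal defers exactly the step that makes the proposition true.

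For comparison, the paper resolves the difficulty by a different device, borrowed from \cite{BBGVS}: it chooses $s\in(0,1)$ so that $u_s=su^++(1-s)u^-$ is $L^2$-orthogonal to the first eigenspace $E_1$, projects $u_s$ onto $\NN_\lambda(\Omega)$ to obtain $v=\alpha u^++\beta u^-\in\NN_\lambda(\Omega)\cap E_1^\perp$, uses the Nehari maximality $J_\lambda(tu^\pm,\Omega)\le J_\lambda(u^\pm,\Omega)$ to conclude $J_\lambda(v,\Omega)\le J_\lambda(u,\Omega)$, and only then applies the Poincar\'e inequality $\|\nabla v\|_2^2\ge\lambda_2\|v\|_2^2$ to the single function $v\perp E_1$. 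Both mechanisms are legitimate; yours would be marginally shorter once the disjoint-support min--max lemma is written down, but without it the lower bound is not established.
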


\begin{proof} 

To prove \eqref{up}, notice that when $\lambda > -\lambda_2$, if $\varphi_2\in H_0^1(\Omega)$ is an
 eigenfunction  associated to $\lambda_2$, 
then $\|\nabla \varphi_2^\pm\|_2^2 +\lambda \|\varphi_2^\pm\|_2^2 = (\lambda +\lambda_2)\| \varphi_2^\pm\|_2^2 > 0$, so that $n_\lambda(\varphi_2^\pm)$ is well defined and $n_{\lambda}(\varphi_2^+)\varphi_2^+ + n_{\lambda}(\varphi_2^-)\varphi_2^-  \in \NN_\lambda^{nod}(\Omega)$. Hence,
 \begin{align*}
\JJn(\lambda) &\le  J_{\lambda}\bigl(n_{\lambda}(\varphi_2^+)\varphi_2^+ + n_{\lambda}(\varphi_2^-)\varphi_2^-,\Omega \bigr) =\kappa n_\lambda(\varphi_2^+)^p \|\varphi_2^+\|_p^p +\kappa n_\lambda(\varphi_2^-)^p \|\varphi_2^-\|_p^p \\
 &= \kappa ( \lambda+ \lambda_2)^\frac{p}{p-2} \left( \left( \frac{\| \varphi_2^+ \|_2}{\| \varphi_2^+ \|_p} \right)^\frac{2p}{p-2} + 
 \left( \frac{\| \varphi_2^- \|_2}{\| \varphi_2^- \|_p} \right)^\frac{2p}{p-2} \right) =: C_1  ( \lambda+ \lambda_2 )^\frac{p}{p-2},
 \end{align*}
which is \eqref{up}. To prove \eqref{down} we use an argument taken from \cite{BBGVS}. Given $u \in \NN_{\lambda}^{nod}(\Omega)$, we notice that, plainly, there exists $s \in (0, 1)$ such that $u_s := s u^+ + (1-s)u^-$ is $L^2$--orthogonal to the eigenspace $E_1$ associated with $\lambda_1$.
Since $n_{\lambda}(u_s)$ is well defined, we see that the function 
$v := n_{\lambda}(u_s)u_s$ belongs to $\NN_{\lambda}(\Omega) \cap E_1^\perp$, and we write it as $v = \alpha u^+ + \beta u^-$ for some $\alpha, \beta > 0$.
Then, as $u^+$ and $u^-$ belong to $\NN_{\lambda}(\Omega)$, 
\begin{equation}
\label{level_u_v}
J_{\lambda}(v,\Omega) = J_{\lambda}(\alpha u^+,\Omega) + J_{\lambda}(\beta u^-,\Omega) \le J_{\lambda}(u^+,\Omega) + J_{\lambda}(u^-,\Omega) = J_{\lambda}(u,\Omega),
\end{equation}
since, by definition of Nehari manifold, $J_\lambda(tu^\pm,\Omega) \le J_\lambda(u^\pm,\Omega)$ for every $t >0$. Now, if $\lambda \ge 0$, obviously
\[
\| \nabla v \|_2^2 \le \| \nabla v \|_2^2 +\lambda \| v \|_2^2,
\]
while, if $\lambda \in (-\lambda_2,0)$,
\[
\frac{\lambda + \lambda_2}{\lambda_2}  \| \nabla v \|_2^2 =  \| \nabla v \|_2^2 + \frac{\lambda}{\lambda_2} \| \nabla v \|_2^2 \le  
\| \nabla v \|_2^2 + \lambda \|  v \|_2^2
\]
because $v \in E_1^\perp$. From the two preceding inequalities we obtain
\[
\min\left(1, \frac{\lambda + \lambda_2}{\lambda_2} \right)\| \nabla v \|_2^2 \le  \| \nabla v \|_2^2 + \lambda \| v \|_2^2 
= \| v \|_p^p \le C(p)^{-p} \| \nabla v \|_2^p,
\]
so that
\[
\| \nabla v \|_2 \ge C(p)^\frac{p}{p-2} \min\Bigl(1, \frac{\lambda + \lambda_2}{\lambda_2} \Bigr)^\frac{1}{p-2}.
\]
Thus
\[
J_{\lambda}(v,\Omega) = \kappa \Bigl( \| \nabla v \|_2^2 + \lambda \| v \|_2^2 \Bigr)
\ge \kappa \min\Bigl(1, \frac{\lambda + \lambda_2}{\lambda_2} \Bigr)\| \nabla v \|_2^2
\ge \kappa C(p)^\frac{2p}{p-2} \min\Bigl(1, \frac{\lambda + \lambda_2}{\lambda_2} \Bigr)^\frac{p}{p-2},
\]
which proves \eqref{down} (with $C_2 =  \kappa C(p)^\frac{2p}{p-2}$) using \eqref{level_u_v} and taking the infimum over $u \in \NN_\lambda^{nod}(\Omega)$.

\end{proof}

\begin{remark}
For future reference, we notice that an estimate similar to \eqref{down} holds for $\JJ_\Omega$: there exists $C>0$ such that for every $\lambda \ge -\lambda_1$,
\begin{equation*}
\JJ_\Omega(\lambda) \ge C \min\left(1,  \frac{\lambda + \lambda_1}{\lambda_1}\right)^{\frac{p}{p-2}}.
\end{equation*}
To check this, it is enough to notice that, for every $u \in \NN_\lambda(\Omega)$ with $\lambda \in (-\lambda_1, 0)$, 
\[
\frac{\lambda + \lambda_1}{\lambda_1}  \| \nabla u \|_2^2 =  \| \nabla u \|_2^2 + \frac{\lambda}{\lambda_1} \| \nabla u \|_2^2 \le  
\| \nabla u \|_2^2 + \lambda \|  u \|_2^2
\]
and proceed exactly as in the proof of Proposition \ref{prelpropJ}.
\end{remark}

The following ``a priori'' type result will be used in the proof of existence of nodal action ground states on arbitrary domains and will also provide a useful tool for the next sections.

\begin{proposition}
\label{convergence_NGS}
Let $(\Omega_n)_{n \ge 1}$ be a sequence of connected open sets such that
$\Omega_n \subseteq \Omega_{n+1}$ for all $n$
and $\Omega = \bigcup_{n \ge 1} \Omega_n$.
Let $(\alpha_n)_{n \ge 1} \subseteq (-\lambda_2, +\infty)$
be a sequence converging to $\lambda \in \intervaloo{-\lambda_2, +\infty}$, where $\lambda_2=\lambda_2(\Omega)$ is the second Dirichlet eigenvalue of $-\Delta$ on $\Omega$.
    
Suppose that, for every $n$, $u_n \in H_0^1(\Omega)$ 
is a nodal action ground state in $\NN_{\alpha_n}^{nod}(\Omega_n)$, extended by $0$ on $\Omega \setminus \Omega_n$.
Then, up to subsequences, $(u_n)_n$ converges in $H^1_0(\Omega)$ to a function $u$, which is
a nodal action ground state in $\NN_{\lambda}^{nod}(\Omega)$ and
\begin{equation}
\label{limitJn}
 \lim_{n \rightarrow \infty} \JJ_{\Omega_n}^{nod}(\alpha_n) = \JJn(\lambda) = J_\lambda(u,\Omega).
\end{equation}
\end{proposition}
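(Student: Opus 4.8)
The plan is to argue by compactness: first show that the extended functions $u_n$ are bounded in $H_0^1(\Omega)$, extract a weakly convergent subsequence, and then identify the weak limit $u$ as a nodal action ground state for $\lambda$ on $\Omega$. Throughout I would exploit the domain monotonicity of eigenvalues and of the Sobolev constant: since $\Omega_n\subseteq\Omega$ and the nodal sets $\Omega_n^\pm:=\{\pm u_n>0\}$ satisfy $\Omega_n^\pm\subseteq\Omega$, one has $\lambda_1(\Omega_n^\pm)\ge\lambda_1(\Omega)$, $\lambda_2(\Omega_n)\ge\lambda_2(\Omega)$, and the analogue of the constant $C(p)$ computed on any of these subdomains is bounded below by the one of $\Omega$. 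In particular $\alpha_n>-\lambda_2(\Omega)\ge-\lambda_2(\Omega_n)$ for every $n$, so Proposition \ref{prelpropJ} applies on each $\Omega_n$ with constants that are uniform in $n$.

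For the uniform bounds, recall $u_n^\pm\in\NN_{\alpha_n}(\Omega_n)$, so $J_{\alpha_n}(u_n,\Omega_n)=\JJ_{\Omega_n}^{nod}(\alpha_n)=\kappa\|u_n\|_p^p$ by \eqref{form_J}. An upper bound follows by fixing once and for all a compactly supported, sign-changing competitor $\psi$ with $\|\nabla\psi^\pm\|_2^2+\lambda\|\psi^\pm\|_2^2>0$ (obtained by approximating in $H_0^1(\Omega)$ the second Dirichlet eigenfunction of $\Omega$): for $n$ large $\supp\psi\subseteq\Omega_n$ and, since $\alpha_n\to\lambda$, the Nehari projection of $\psi^\pm$ onto $\NN_{\alpha_n}(\Omega_n)$ is admissible and its action converges, giving $\limsup_n\JJ_{\Omega_n}^{nod}(\alpha_n)<\infty$. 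Since $\Omega$ is bounded, $\|u_n^\pm\|_2\le C\|u_n^\pm\|_p$, so the resulting bound on $\|u_n^\pm\|_p$ controls $\|u_n^\pm\|_2$, and the Nehari identity $\|\nabla u_n^\pm\|_2^2=\|u_n^\pm\|_p^p-\alpha_n\|u_n^\pm\|_2^2$ then bounds $\|\nabla u_n^\pm\|_2$; hence $(u_n)$ is bounded in $H_0^1(\Omega)$. Up to a subsequence, $u_n\rightharpoonup u$ in $H_0^1(\Omega)$ and, by Rellich, $u_n\to u$ in $L^2$ and $L^p$ and a.e., with $u_n^\pm\to u^\pm$ in the same senses. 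Passing to the limit in the weak formulation of \eqref{nlseNOMASS} (for $\phi\in C_c^\infty(\Omega)$, eventually supported in $\Omega_n$) shows that $u$ is a weak solution of \eqref{nlseNOMASS} with parameter $\lambda$; in particular, testing with $u^\pm$ gives $u^\pm\in\NN_\lambda(\Omega)$ as soon as $u^\pm\neq0$.

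The heart of the matter is to show that $u$ changes sign. First, \eqref{down} applied on $\Omega_n$, together with the uniform control of its constants and $\alpha_n\to\lambda>-\lambda_2(\Omega)$, yields $\liminf_n\JJ_{\Omega_n}^{nod}(\alpha_n)>0$, so $\|u\|_p=\lim_n\|u_n\|_p>0$ and $u\not\equiv0$. To see that both parts survive I would distinguish two regimes. If $\lambda>-\lambda_1(\Omega)$, then $-\lambda<\lambda_1(\Omega)\le\lambda_1(\Omega_n^\pm)$, and arguing as in \eqref{down} on each nodal set (using $\|\nabla u_n^\pm\|_2^2\ge\lambda_1(\Omega_n^\pm)\|u_n^\pm\|_2^2$ together with the Sobolev inequality) gives $\|u_n^\pm\|_p^{p-2}\ge\min\bigl(1,\frac{\lambda_1(\Omega)+\alpha_n}{\lambda_1(\Omega)}\bigr)C(p)^2$, a quantity bounded away from $0$; hence $u^\pm\neq0$. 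If instead $\lambda\le-\lambda_1(\Omega)$, this estimate may degenerate, and I would argue differently: were $u$ of constant sign, say $u\ge0$, then $u$ would be a nonzero nonnegative solution of \eqref{nlseNOMASS}, which is impossible by Remark \ref{no_pos_sol}; hence $u^+\neq0$ and $u^-\neq0$ also in this case. This dichotomy is the main obstacle, precisely because in the range $\lambda\in(-\lambda_2,-\lambda_1]$ signed ground states do not exist and the naive energy-splitting comparison with $\JJ_\Omega(\lambda)=0$ carries no information; it is resolved by trading the failing eigenvalue estimate for the nonexistence of signed solutions applied to the limit equation.

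Once $u^\pm\neq0$, the identification is straightforward. Since $u$ solves \eqref{nlseNOMASS}, one has $u^\pm\in\NN_\lambda(\Omega)$ and $u\in\NN_\lambda^{nod}(\Omega)$, so $J_\lambda(u,\Omega)\ge\JJn(\lambda)$. On the other hand, $J_\lambda(u,\Omega)=\kappa\|u\|_p^p=\lim_n\kappa\|u_n\|_p^p=\lim_n\JJ_{\Omega_n}^{nod}(\alpha_n)$, while approximating a near-minimizer of $\JJn(\lambda)$ by compactly supported functions and projecting onto $\NN_{\alpha_n}(\Omega_n)$ (exactly as for the upper bound above) gives $\limsup_n\JJ_{\Omega_n}^{nod}(\alpha_n)\le\JJn(\lambda)$. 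Combining these, all the quantities coincide, so $u$ is a nodal action ground state in $\NN_\lambda^{nod}(\Omega)$ and \eqref{limitJn} holds. Finally, since $u^\pm\in\NN_\lambda(\Omega)$ we get $\|\nabla u_n^\pm\|_2^2=\|u_n^\pm\|_p^p-\alpha_n\|u_n^\pm\|_2^2\to\|u^\pm\|_p^p-\lambda\|u^\pm\|_2^2=\|\nabla u^\pm\|_2^2$, which upgrades the weak convergence to strong convergence of $u_n$ to $u$ in $H_0^1(\Omega)$.
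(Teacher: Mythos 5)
Your proof is correct and follows essentially the same route as the paper's: uniform bounds from a fixed sign-changing competitor, weak compactness and passage to the limit in the equation, nontriviality via the lower bound \eqref{down}, the matching upper bound on the levels via a compactly supported near-minimizer of $\JJn(\lambda)$, and strong convergence from convergence of the Nehari quantities. The only cosmetic difference is in showing that $u$ changes sign: you split into the cases $\lambda>-\lambda_1$ (quantitative Nehari/Poincar\'e lower bound on $\|u_n^\pm\|_p$) and $\lambda\le-\lambda_1$ (Remark \ref{no_pos_sol}), whereas the paper runs a single contradiction argument that combines the same two ingredients.
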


\begin{proof}
Let $\varphi_2 \in H^1_0(\Omega_1)$ be an eigenfunction corresponding to $\lambda_2(\Omega_1) \ge \lambda_2(\Omega)$,
 extended by $0$ to $\Omega$. Since for every $n\ge 1$
\[
\|\nabla \varphi_2^\pm\|_2^2 + \alpha_n\| \varphi_2^\pm\|_2^2 = (\lambda_2(\Omega_1)  + \alpha_n)\| \varphi_2^\pm\|_2^2 > 
(\lambda_2(\Omega_1)  -\lambda_2(\Omega))\| \varphi_2^\pm\|_2^2 \ge 0,
\]
we see that the numbers $n_{\alpha_n}(\varphi_2^\pm)$ are well defined. Therefore
\begin{align*}
J_{\alpha_n}(u_n,\Omega) &= \inf_{v \in \NN_{\alpha_n}^{nod}(\Omega_n)} J_{\alpha_n}(v,\Omega_n)
\le J_{\alpha_n}\bigl(n_{\alpha_n}(\varphi_2^+)\varphi_2^+ + n_{\alpha_n}(\varphi_2^-)\varphi_2^- ,\Omega_n\bigr)\\
&=\kappa\bigl( \lambda_2(\Omega_1) + \alpha_n \bigr)^\frac{p}{p-2}
\Biggl( \biggl( \frac{\| \varphi_2^+ \|_2}{\| \varphi_2^+ \|_p} \biggr)^\frac{2p}{p-2} +
\biggl( \frac{\| \varphi_2^- \|_2}{\| \varphi_2^- \|_p} \biggr)^\frac{2p}{p-2} \Biggr),
\end{align*}
from which we deduce (the sequence $\alpha_n$ being convergent) that $J_{\alpha_n}(u_n,\Omega)$ is bounded. By \eqref{form_J}, this implies that
 $u_n$ is bounded in $L^p(\Omega)$ and hence in $L^2(\Omega)$, because $\Omega$ is bounded.
Again by \eqref{form_J}, we obtain that $u_n$ is bounded in $H^1_0(\Omega)$.
Therefore, up to  subsequences, $(u_n)_n$ converges weakly in $H^1_0(\Omega)$
and strongly in $L^p(\Omega)$ to $u \in H^1_0(\Omega)$. Noticing that, if $u_n \in \NN_{\alpha_n}^{nod}(\Omega_n)$, then we also have $u_n \in \NN_{\alpha_n}^{nod}(\Omega)$, \eqref{down} shows that
\begin{align*}
\kappa\|u\|_p^p &= \kappa \lim_n \|u_n \|_p^p = \lim_n J_{\alpha_n}(u_n,\Omega_n) \ge \liminf_n \JJn(\alpha_n) \\
&\ge C_2\liminf_n  \min\left(1,  \frac{\alpha_n + \lambda_2}{\lambda_2}\right)^{\frac{p}{p-2}} = C_2\min\left(1,  \frac{\lambda+ \lambda_2}{\lambda_2}\right)^{\frac{p}{p-2}} >0
\end{align*}
because $\alpha_n \to \lambda >-\lambda_2$. Therefore $u \not\equiv 0$.

Given any $\varphi \in \C_c^\infty(\Omega)$,
we have  $\supp(\varphi) \subseteq \Omega_n$ for all $n$ large enough, and since
 by Remark \ref{rem:NAGS_sol} $u_n$ is a solution of \eqref{nlseNOMASS} in $\Omega_n$ with multiplier $\alpha_n$,
\begin{equation*}
\int_{\Omega} \nabla u_n \cdot \nabla \varphi + \alpha_n \int_{\Omega} u_n \varphi = \int_{\Omega} |u_n|^{p-2} u_n \varphi
\end{equation*}
for all $n$ large enough. Letting $n \to \infty$, we see that $u$ solves problem \eqref{nlseNOMASS} with $\lambda = \lim_n \alpha_n$.
    
Next we show that $u$ is nodal. Assume by contradiction that, say,  $u \ge 0$. 
Then, by Remark~\ref{no_pos_sol},  $\lambda > -\lambda_1$.
Since, as above, $u_n^\pm \in \NN_{\alpha_n}(\Omega)$, by Proposition~\ref{prop_J} we see that
\[
\kappa\|u^-\|_p^p = \kappa \lim_n \|u_n^- \|_p^p = \lim_n J_{\alpha_n}(u_n^-,\Omega_n) \ge \liminf_n \JJ_\Omega(\alpha_n) = \JJ_\Omega(\lambda) > 0,
\]
i.e. a contradiction. Hence, $u\in\NN_\lambda^{nod}(\Omega)$.
    
It remains to prove that $u$ is a nodal action ground state and that \eqref{limitJn} holds. First notice that, by the already proved convergences of $u_n$ and $\alpha_n \to \lambda$,
\begin{equation}
\label{lim1}
\lim_n \JJ_{\Omega_n}^{nod}(\alpha_n) = \lim_n J_{\alpha_n}(u_n,\Omega_n) = \lim_n \kappa\|u_n\|_p^p = \kappa\|u\|_p^p = J_\lambda(u,\Omega) \ge \JJn(\lambda),
\end{equation}
because $u_n \in \NN_{\alpha_n}^{nod}(\Omega_n)$ and $u \in  \NN_\lambda^{nod}(\Omega)$.
   
We now establish the reversed inequality. To this aim, given any $\varepsilon >0$, it is easily seen, by density, that there exists a function $v \in \NN_\lambda^{nod}(\Omega) \cap C_c^\infty(\Omega)$ such that $J_\lambda(v,\Omega) \le \JJn(\lambda) + \eps$. Since $v$ has compact support in $\Omega$, actually $v \in  \NN_\lambda^{nod}(\Omega_n)$ for every $n$ large enough. Now, as $n\to \infty$,
\[
n_{\alpha_n} (v^+)^{p-2} = \frac{\|\nabla v^+\|_2^2 +\alpha_n\|v^+\|_2^2}{\|v^+\|_p^p} = 1 + (\alpha_n -\lambda)\frac{\|v^+\|_2^2}{\|v^+\|_p^p} = 1 +o(1),
\]
and the same for $n_{\alpha_n} (v^-)$. In particular, $n_{\alpha_n} (v^+)v^+ +n_{\alpha_n} (v^-)v^- \in \NN_{\alpha_n}(\Omega_n)$ and 
\begin{align*}
\JJ_{\Omega_n}^{nod}(\alpha_n) &\le J_{\alpha_n}\left(n_{\alpha_n} (v^+)v^+ +n_{\alpha_n} (v^-)v^-, \Omega_n \right) = \kappa n_{\alpha_n} (v^+)^p \|v^+\|_p^p +
\kappa n_{\alpha_n} (v^-)^p \|v^-\|_p^p \\
&\le (1+o(1))\kappa \|v\|_p^p = (1+o(1)) J_\lambda(v,\Omega) \le (1+o(1))\left(\JJ_\Omega^{nod}(\lambda) + \eps\right),
\end{align*}
from which we obtain
\[
\lim_n \JJ_{\Omega_n}^{nod}(\alpha_n) \le \JJ_\Omega^{nod}(\lambda) + \eps.
\]
Recalling that $\eps$ is arbitrary and coupling with \eqref{lim1}, we see that $u$ is a nodal action ground state and \eqref{limitJn} holds. This also shows that the convergence of $u_n$ to $u$ is strong in $H_0^1(\Omega)$ and completes the proof.
\end{proof}

We can now prove that nodal action ground states exist on arbitrary bounded open sets $\Omega$
when $\lambda > -\lambda_2(\Omega)$.

\begin{proposition}
\label{Prop Nodal}
For every $p \in (2,2^*)$ and every $\lambda>-\lambda_2$,  nodal action ground states in $\NN_\lambda^{nod}(\Omega)$ exist.
\end{proposition}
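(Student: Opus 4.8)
The strategy is to reduce the general, possibly non-smooth, domain to the smooth case, where existence is already guaranteed by \cite[Theorem 1.1]{BW}, and then to pass to the limit by means of the approximation result in Proposition \ref{convergence_NGS}.

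Fix $\lambda > -\lambda_2(\Omega)$. First I would exhaust $\Omega$ by an increasing sequence $(\Omega_n)_{n\ge1}$ of bounded, smooth, connected open sets with $\Omega_n \subseteq \Omega_{n+1}$ and $\bigcup_{n\ge1}\Omega_n = \Omega$. Such an exhaustion is standard for a connected bounded open set: starting from a compact exhaustion $K_n \nearrow \Omega$, one smooths slightly enlarged neighbourhoods of the $K_n$ to obtain smooth $\Omega_n$, and connectedness can be enforced using that $\Omega$, being open and connected, is path-connected (discarding the components that do not meet a fixed base point and, if necessary, joining the relevant pieces by thin tubes contained in $\Omega$).

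The key elementary observation is domain monotonicity of the Dirichlet spectrum: since $\Omega_n \subseteq \Omega$, we have $\lambda_2(\Omega_n) \ge \lambda_2(\Omega)$, whence $-\lambda_2(\Omega_n) \le -\lambda_2(\Omega) < \lambda$ for every $n$. In particular $\lambda > -\lambda_2(\Omega_n)$, so, $\Omega_n$ being smooth, \cite[Theorem 1.1]{BW} provides a nodal action ground state $u_n \in \NN_\lambda^{nod}(\Omega_n)$ for each $n$.

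Finally I would apply Proposition \ref{convergence_NGS} with the constant sequence $\alpha_n \equiv \lambda$ — which trivially belongs to $(-\lambda_2(\Omega), +\infty)$ and converges to $\lambda \in (-\lambda_2(\Omega), +\infty)$ — and the states $u_n$ just constructed. The proposition then yields, up to a subsequence, strong convergence $u_n \to u$ in $H_0^1(\Omega)$ to a nodal action ground state $u \in \NN_\lambda^{nod}(\Omega)$, which is precisely the assertion. The only genuinely delicate point is the construction of the smooth connected exhaustion together with the verification that the connectedness hypothesis of Proposition \ref{convergence_NGS} is met; everything else is a direct combination of eigenvalue monotonicity, the smooth existence theory of \cite{BW}, and the compactness already built into Proposition \ref{convergence_NGS}.
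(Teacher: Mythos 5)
Your proof is correct and follows essentially the same route as the paper: exhaust $\Omega$ by smooth connected domains, invoke \cite[Theorem 1.1]{BW} on each $\Omega_n$ via the eigenvalue monotonicity $\lambda_2(\Omega_n)\ge\lambda_2(\Omega)$, and pass to the limit with Proposition \ref{convergence_NGS} applied to the constant sequence $\alpha_n\equiv\lambda$. The only difference is that the paper dispenses with your hand-made construction of the exhaustion by citing \cite[Proposition 8.2.1]{Da}, which supplies the increasing sequence of smooth, connected, bounded open sets directly.
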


\begin{proof}
A proof that  nodal action ground states exist when $\lambda > -\lambda_1$ can be found in \cite{CCN, SW}.
This result was extended in \cite[Theorem 1.1]{BW} to cover all $\lambda > -\lambda_2$, assuming $\Omega$ is smooth
(regularity is used to turn $\NN_{\lambda}^{nod}(\Omega)$ into a manifold,
see \cite[Lemma 3.2]{BW}). Here we slightly extend \cite[Theorem 1.1]{BW} to arbitrary domains.

Using \cite[Proposition 8.2.1]{Da}, there exists a sequence of connected and bounded
open sets $\Omega_n$ with smooth boundary such that $\Omega_n \subseteq \Omega_{n+1}$
for all $n$ and $\Omega = \bigcup_{n \ge 1} \Omega_n$.  Let $\lambda > -\lambda_2(\Omega)$.
By inclusion, $\lambda_2(\Omega_n) \ge \lambda_2(\Omega)$ for all $n \ge 1$,
so that $\lambda > -\lambda_2(\Omega_n)$. By \cite[Theorem 1.1]{BW},
there exists a nodal action ground state $u_n\in \NN_\lambda^{nod}(\Omega_n)$.
Applying Proposition~\ref{convergence_NGS}, the sequence $(u_n)_n$ converges, up to subsequences,
to a nodal action ground state in $\NN_\lambda^{nod}(\Omega)$.
\end{proof}

\section{The level of nodal action ground states}
\label{sec:lev}

In this section we collect some properties of the nodal action ground state level that will be used later on. 
We begin by a lower bound on the $L^p$ norm of positive and negative parts of nodal action ground states.

\begin{lemma}
    \label{lower_bound_parts}
For every  $\alpha > -\lambda_2$ there exists $c_\alpha > 0$ such that, for all $\lambda \ge \alpha$
and all $u \in \NN_{\lambda}^{nod}(\Omega)$ such that $J_{\lambda}(u,\Omega) = \JJ_\Omega^{nod}(\lambda)$, there results
\begin{equation}
\label{calpha}
\| u^\pm \|_p \ge c_\alpha.
\end{equation}
\end{lemma}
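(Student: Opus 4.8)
The plan is to split the range $\lambda \ge \alpha$ according to the sign of $\lambda$: for $\lambda \ge 0$ an elementary Nehari estimate gives the bound directly, while the bounded range $\lambda \in [\alpha, 0)$ I would treat by a compactness argument based on Proposition~\ref{convergence_NGS}. Since $-u$ is again a nodal action ground state in $\NN_{\lambda}^{nod}(\Omega)$ whenever $u$ is, with the same action level and with $\|(-u)^+\|_p = \|u^-\|_p$, it suffices to produce a uniform lower bound for $\|u^+\|_p$; the bound for $\|u^-\|_p$ then follows by applying that bound to $-u$.

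First I would dispose of $\lambda \ge 0$. From $u^+ \in \NN_{\lambda}(\Omega)$ one has $\|\nabla u^+\|_2^2 + \lambda\|u^+\|_2^2 = \|u^+\|_p^p$, hence $\|\nabla u^+\|_2^2 \le \|u^+\|_p^p$ because $\lambda\|u^+\|_2^2 \ge 0$. Coupling this with $\|\nabla u^+\|_2^2 \ge C(p)^2\|u^+\|_p^2$, which is just the definition of $C(p)$, yields $C(p)^2\|u^+\|_p^2 \le \|u^+\|_p^p$, and since $u^+ \not\equiv 0$ this forces $\|u^+\|_p \ge C(p)^{2/(p-2)}$. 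This bound is uniform and independent of $\alpha$, so it already settles every $\lambda \ge 0$ (in particular the whole statement when $\alpha \ge 0$).

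For $\lambda \in [\alpha, 0)$ I would argue by contradiction. Suppose there were $\lambda_n \in [\alpha, 0)$ and nodal action ground states $u_n \in \NN_{\lambda_n}^{nod}(\Omega)$ with $\|u_n^+\|_p \to 0$. Since $[\alpha, 0]$ is compact, up to a subsequence $\lambda_n \to \lambda_* \in [\alpha, 0] \subseteq (-\lambda_2, +\infty)$, using $\alpha > -\lambda_2$. Applying Proposition~\ref{convergence_NGS} with the constant sequence $\Omega_n \equiv \Omega$ and $\alpha_n = \lambda_n$, a further subsequence of $(u_n)_n$ converges strongly in $H_0^1(\Omega)$ to a nodal action ground state $u_* \in \NN_{\lambda_*}^{nod}(\Omega)$. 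As $|a^+ - b^+| \le |a - b|$ pointwise and $H_0^1(\Omega) \hookrightarrow L^p(\Omega)$ continuously, $u_n^+ \to u_*^+$ in $L^p(\Omega)$, so $\|u_*^+\|_p = \lim_n\|u_n^+\|_p = 0$ and thus $u_*^+ \equiv 0$. This contradicts $u_* \in \NN_{\lambda_*}^{nod}(\Omega)$, which demands $u_*^+ \in \NN_{\lambda_*}(\Omega)$, hence $u_*^+ \not\equiv 0$. Therefore $\|u^+\|_p$ stays bounded below by some positive constant on this range, and choosing $c_\alpha$ as the smaller of that constant and $C(p)^{2/(p-2)}$ finishes the proof.

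The main obstacle is exactly the negative range $\lambda \in [\alpha, 0)$. There the term $\lambda\|u^+\|_2^2$ is negative and cannot be dropped, and since $u^+ \ge 0$ one cannot orthogonalize it to the first eigenspace $E_1$ as in Proposition~\ref{prelpropJ}; that trick only controlled the \emph{sum} $\|u^+\|_p^p + \|u^-\|_p^p$ through the lower bound \eqref{down}, not the individual parts. A direct inequality is genuinely delicate, because a single part $u^+$ could a priori carry small $L^p$ norm while staying bounded away from $0$ in $H_0^1(\Omega)$, its Dirichlet energy nearly cancelling $|\lambda|\,\|u^+\|_2^2$ when $u^+$ resembles a first eigenfunction of its own nodal domain with $\lambda_1(\supp u^+) \approx -\lambda$. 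It is precisely the \emph{ground state} character of $u$, transported to the limit via the strong $H_0^1$ compactness of Proposition~\ref{convergence_NGS}, that excludes this degeneracy, which is why I would route the argument through that proposition rather than through a self-contained estimate.
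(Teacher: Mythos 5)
Your proof is correct and rests on the same key mechanism as the paper's: a contradiction argument resolved by the compactness result of Proposition~\ref{convergence_NGS}, whose limit is a nodal action ground state and therefore has nontrivial positive and negative parts, contradicting $\|u_n^+\|_p\to0$. The only difference is how the frequencies are confined to a compact set before invoking that proposition: you split off $\lambda\ge0$ with the elementary Nehari--Sobolev estimate $\|u^\pm\|_p\ge C(p)^{2/(p-2)}$ and run the compactness argument only on $[\alpha,0]$, whereas the paper keeps a single contradiction argument over all $\lambda\ge\alpha$ and deduces $\limsup_n\lambda_n\le-\lambda_1$ from $\JJ_\Omega(\lambda_n)\le\kappa\|u_n^+\|_p^p\to0$ together with the monotonicity of $\JJ_\Omega$ --- a cosmetic variation.
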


\begin{proof} We argue by contradiction. Assume then that there exist sequences $(\lambda_n)_n$ and $(u_n)_n$ such that, for every $n$, $\lambda_n\geq\alpha$ and  $u_n \in \NN_{\lambda_n}^{nod}(\Omega)$ is a nodal action ground state such that, for instance,
    \begin{equation}
        \label{lim_u_n_plus_contrad}
        \| u_n^+ \|_p \to 0
\end{equation}
as $n \to \infty$. Observe that this implies that $\JJ_\Omega(\lambda_n)\to0$ as $n\to\infty$, so that $\limsup_n \lambda_n\leq-\lambda_1$ by Proposition \ref{prop_J}. Hence, $(\lambda_n)_n$ is bounded and, up to subsequences, we may assume that $\lambda_n \to a$ for some $a \ge \alpha$. In particular, $a > -\lambda_2$. Then, Proposition~\ref{convergence_NGS} implies that, up to subsequences again, $(u_n)_n$ converges in $H^1(\Omega)$ to some nodal ground state $u$ in $\NN_a^{nod}(\Omega)$. In particular, $u^+ \ne 0$. Since $(u_n^+)_n$ converges strongly in $L^p(\Omega)$ to $u^+$,
    this contradicts \eqref{lim_u_n_plus_contrad}.
\end{proof}

We can now provide an analogue of Proposition \ref{prop_J} in the nodal setting.

\begin{proposition}
\label{deraction} For every $p \in (2,2^*)$, 
\begin{itemize}
\item[i)] the function $\JJn:\R \to \R$ is locally Lipschitz continuous and strictly increasing on $[-\lambda_2, +\infty)$;
\item[ii)] for every $\lambda>-\lambda_2$, let
\[
Q_p^{nod}(\lambda) := \Bigl\{\|u\|_2^2 \mid u\in \NN_{\lambda}^{nod}(\Omega)
\text{ and } J_\lambda(u,\Omega)=\JJ_\Omega^{nod}(\lambda) \Bigr\}.
\]
Then
\end{itemize}
\begin{equation}
\label{der}
\limsup_{\varepsilon\to0^+} \frac{\JJ_\Omega^{nod}(\lambda+\varepsilon)-\JJ_\Omega^{nod}(\lambda)}{\varepsilon} \le
\frac12 \inf Q_p^{nod}(\lambda)  \le \frac12 \sup Q_p^{nod}(\lambda) \le
 \liminf_{\varepsilon\to0^-} \frac{\JJ_\Omega^{nod}(\lambda+\varepsilon)-\JJ_\Omega^{nod}(\lambda)}{\varepsilon}.
\end{equation}
Furthermore, for almost every $\lambda$, all nodal action ground states have the same mass (i.e. $Q_p^{nod}(\lambda)$ is a singleton).
\end{proposition}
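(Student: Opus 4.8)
The plan is to mirror the structure of Proposition \ref{prop_J}, exploiting the envelope-type heuristic that the $\lambda$-derivative of the level should be $\tfrac12\|u\|_2^2$, and to treat the positive and negative parts separately through the renormalizations $n_\lambda(u^\pm)$. The computation underlying everything is the following. Let $u$ be a nodal action ground state at a frequency $\lambda>-\lambda_2$ (which exists by Proposition \ref{Prop Nodal}). For $|\varepsilon|$ small we have $\|\nabla u^\pm\|_2^2+(\lambda+\varepsilon)\|u^\pm\|_2^2>0$, so $n_{\lambda+\varepsilon}(u^\pm)$ is well defined and $n_{\lambda+\varepsilon}(u^+)u^++n_{\lambda+\varepsilon}(u^-)u^-\in\NN_{\lambda+\varepsilon}^{nod}(\Omega)$. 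Since $u^\pm\in\NN_{\lambda}(\Omega)$, one finds $n_{\lambda+\varepsilon}(u^\pm)^{p-2}=1+\varepsilon\,t^\pm$ with $t^\pm:=\|u^\pm\|_2^2/\|u^\pm\|_p^p$, so that, using \eqref{form_J} and $\kappa\tfrac{p}{p-2}=\tfrac12$,
\[
\JJn(\lambda+\varepsilon)-\JJn(\lambda)\le \kappa\sum_{\pm}\|u^\pm\|_p^p\Bigl[(1+\varepsilon t^\pm)^{\frac{p}{p-2}}-1\Bigr]=\tfrac12\,\varepsilon\,\|u\|_2^2+o(\varepsilon)\qquad(\varepsilon\to0).
\]

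For part ii) this single estimate suffices. Dividing by $\varepsilon>0$ and letting $\varepsilon\to0^+$ gives $\limsup_{\varepsilon\to0^+}\varepsilon^{-1}(\JJn(\lambda+\varepsilon)-\JJn(\lambda))\le\tfrac12\|u\|_2^2$; dividing by $\varepsilon<0$, which reverses the inequality, and letting $\varepsilon\to0^-$ gives $\liminf_{\varepsilon\to0^-}\varepsilon^{-1}(\JJn(\lambda+\varepsilon)-\JJn(\lambda))\ge\tfrac12\|u\|_2^2$. Taking the infimum over ground states in the first and the supremum in the second yields the two outer inequalities of \eqref{der}; the middle one is trivial. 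Note that no uniform control is needed here, since the error term is taken for a fixed $u$.

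For part i) I would first record uniform bounds on ground states over a compact interval $[a,b]\subset(-\lambda_2,+\infty)$: by \eqref{up} the quantity $\|u\|_p^p=\JJn(\lambda)/\kappa$ is bounded above, by Lemma \ref{lower_bound_parts} the norms $\|u^\pm\|_p$ are bounded below by some $c_a>0$, and the boundedness of $\Omega$ (Hölder) gives $\|u^\pm\|_2\le|\Omega|^{\frac12-\frac1p}\|u^\pm\|_p$. Hence $\|u\|_2^2$ and the ratios $t^\pm=\|u^\pm\|_2^2/\|u^\pm\|_p^p\le|\Omega|^{1-2/p}\,c_a^{\,2-p}$ are bounded above by constants $B,T$ depending only on $[a,b]$. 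The upper bound on $t^\pm$ is decisive: for $0<\mu-\lambda<1/T$ it guarantees that $\|\nabla u^\pm\|_2^2+\lambda\|u^\pm\|_2^2=\|u^\pm\|_p^p-(\mu-\lambda)\|u^\pm\|_2^2>0$ for every ground state $u$ at $\mu$, so that the downward renormalization $n_\lambda(u^\pm)$ is legitimate. Using a ground state at $\lambda$ as a competitor at $\mu$ gives $\JJn(\mu)-\JJn(\lambda)\le L(\mu-\lambda)$, while using a ground state at $\mu$ as a competitor at $\lambda$, together with the elementary bound $(1-x)^{p/(p-2)}-1\ge -Cx$ for $x=(\mu-\lambda)t^\pm$ small, yields $\JJn(\lambda)-\JJn(\mu)\le L(\mu-\lambda)$ and, since $n_\lambda(u^\pm)<1$, the strict inequality $\JJn(\lambda)<\JJn(\mu)$. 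Thus $\JJn$ is locally Lipschitz and strictly increasing on $(-\lambda_2,+\infty)$; strict monotonicity globalizes by chaining small increments, and the endpoint is handled by $\JJn(-\lambda_2)=0$ (Proposition \ref{properties_NGS}) together with $\JJn(\lambda)\le C_1(\lambda+\lambda_2)^{p/(p-2)}\to0$ from \eqref{up}. Finally, for the a.e.\ singleton claim, local Lipschitz continuity makes $\JJn$ differentiable at a.e.\ $\lambda>-\lambda_2$; at such a point both one-sided difference quotients equal $(\JJn)'(\lambda)$, and inserting this into \eqref{der} forces $(\JJn)'(\lambda)\le\tfrac12\inf Q_p^{nod}(\lambda)\le\tfrac12\sup Q_p^{nod}(\lambda)\le(\JJn)'(\lambda)$, so $Q_p^{nod}(\lambda)$ is a single point.

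The main obstacle is genuinely nodal and lies in part i): unlike the signed case of Proposition \ref{prop_J}, the renormalization must be performed on $u^+$ and $u^-$ \emph{separately}, and for a downward comparison ($\mu>\lambda$) the positivity of $\|\nabla u^\pm\|_2^2+\lambda\|u^\pm\|_2^2$ is not automatic. Securing it — which is exactly what produces both the lower Lipschitz increment and the strict monotonicity — requires the uniform upper bound on the ratios $t^\pm$, and this is where Lemma \ref{lower_bound_parts} and the boundedness of $\Omega$ enter crucially.
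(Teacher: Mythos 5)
Your treatment of \eqref{der} and of the a.e.-singleton claim is correct and is essentially the paper's own argument: the exact identity $n_{\lambda+\varepsilon}(u^\pm)^{p-2}=1+\varepsilon\|u^\pm\|_2^2/\|u^\pm\|_p^p$, the first-order expansion of its $\frac{p}{p-2}$ power together with $\kappa\,\tfrac{p}{p-2}=\tfrac12$, and the deduction of strict monotonicity from $n_\alpha(u_\beta^\pm)<1$ once the downward renormalization is legitimized by Lemma \ref{lower_bound_parts} and H\"older, are all exactly the steps in the paper. Where you genuinely diverge is the local Lipschitz step. The paper does not estimate finite differences by transporting ground states over a finite distance; it first obtains continuity of $\JJn$ from Proposition \ref{convergence_NGS}, then notes that the already-proved first inequality in \eqref{der}, combined with $\|u\|_2^2\le C\bigl(\JJn(\lambda)/\kappa\bigr)^{2/p}$, bounds the upper right Dini derivative of $\JJn$ by some $L/2$ uniformly on any compact $K\subset[-\lambda_2,+\infty)$, and finally upgrades this pointwise bound to $L$-Lipschitz continuity via a no-interior-minimum argument applied to $f(\beta)=\JJn(\beta)-\JJn(\alpha)-L(\beta-\alpha)$. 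Your route is more quantitative and avoids that real-analysis lemma; the paper's route is uniform up to the endpoint for free, because its derivative bound involves only $\JJn(\lambda)$ itself (which vanishes at $-\lambda_2$) and not the constants $c_a$ of Lemma \ref{lower_bound_parts}.

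The one point you must tighten is precisely that endpoint. Your constants $T=|\Omega|^{1-2/p}c_a^{2-p}$ blow up as $a\to-\lambda_2^+$ (since $\|u^\pm\|_p^p\le\JJn(\lambda)/\kappa\to0$ forces $c_a\to0$), so your finite-difference estimate is uniform only on compact subsets of $(-\lambda_2,+\infty)$; and the observation $\JJn(\lambda)\le C_1(\lambda+\lambda_2)^{p/(p-2)}$ controls $\JJn(\beta)-\JJn(-\lambda_2)$ but not $\JJn(\beta)-\JJn(\alpha)$ for pairs $-\lambda_2<\alpha<\beta$ both close to $-\lambda_2$ with $\beta-\alpha\ll(\beta+\lambda_2)^{p/(p-2)}$, so the Lipschitz claim on neighbourhoods of $-\lambda_2$ is not yet justified as written. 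This is repairable within your scheme: when $x:=(\mu-\lambda)t^\pm\ge1$ one can bound
\[
(1+x)^{\frac{p}{p-2}}\|u^\pm\|_p^p\le 2^{\frac{p}{p-2}}(\mu-\lambda)^{\frac{p}{p-2}}\Bigl(\tfrac{\|u^\pm\|_2}{\|u^\pm\|_p}\Bigr)^{\frac{2p}{p-2}}\le 2^{\frac{p}{p-2}}|\Omega|\,(\mu-\lambda)^{\frac{p}{p-2}}=o(\mu-\lambda),
\]
using H\"older, so the large-ratio regime contributes a superlinear (hence harmless) error; alternatively, simply adopt the paper's Dini-derivative argument, for which your already-established inequality \eqref{der} is the only input needed.
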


\begin{proof}
To establish $i)$, note first that the continuity of $\JJn$ follows directly by Proposition \ref{convergence_NGS} taking $\Omega_n=\Omega$ for every $n$. 

Since $\JJn \equiv 0$ on $(-\infty, -\lambda_2]$ by Proposition \ref{properties_NGS}, to obtain the desired monotonicity of $\JJn$ we only have to prove that $\JJn$ is strictly increasing for $\lambda \ge -\lambda_2$.
To check this, it is enough to show that every $\lambda >-\lambda_2$ has a neighborhood where $\JJn$ is strictly increasing. To this aim, let $\lambda >-\lambda_2$ be arbitrarily fixed. We claim that there exists $\delta >0$ such that, for every $\alpha, \beta \in (\lambda-\delta, \lambda+\delta)$ with $\alpha < \beta$, if $u_\beta \in \NN_\beta^{nod}(\Omega)$ satisfies $J_\beta (u_\beta,\Omega) = \JJ_\Omega^{nod}(\beta)$, then $\|\nabla u_\beta^\pm \|_2^2 + \alpha \|u_\beta^\pm\|_2^2 >0$. Indeed,
since $\Omega$ is bounded and $p>2$, $\|u\|_2 \le C\|u\|_p$ for every $u \in H^1_0(\Omega)$, with $C = |\Omega|^{\frac{p-2}{2p}}$. Hence,
\begin{align*}
\|\nabla u_\beta^\pm \|_2^2 + \alpha \|u_\beta^\pm\|_2^2 &= \|\nabla u_\beta^\pm \|_2^2 + \beta \|u_\beta^\pm\|_p^p +  (\alpha -\beta) \|u_\beta^\pm\|_2^2 =
\|u_\beta^\pm\|_p^p + (\alpha -\beta) \|u_\beta^\pm\|_2^2 \\
& \ge \|u_\beta^\pm\|_p^p + C^2(\alpha -\beta) \|u_\beta^\pm\|_p^2
>0 
\end{align*}
if $\beta -\alpha$ (namely $\delta$) is small enough (note that, by Lemma \ref{lower_bound_parts},  $\|u_\beta^\pm\|_p$ are uniformly bounded away from zero if $\beta$ is bounded away from $-\lambda_2$). This shows that $n_\alpha(u_\beta^\pm)$ is well defined. So, letting $\alpha, \beta$ and $u_\beta$ be as above and noticing that $n_\alpha (u_\beta^\pm) < 1$, we have
\begin{align}
\label{interm}
\JJ^{nod}_\Omega(\alpha) 
&\le
J_{\alpha}\left(n_{\alpha}(u_\beta^+)u_\beta^+ + n_{\alpha}(u_\beta^-)u_\beta^-,\Omega\right)
= \kappa \Big(n_{\alpha}(u_\beta^+)^p\|u_\beta^+\|_p^p + n_{\alpha}(u_\beta^-)^p\|u_\beta^-\|_p^p \Big)
\nonumber \\
&<  \kappa \|u_\beta\|_p^p  = J_{\beta}(u_\beta,\Omega)= \JJ_\Omega^{nod}(\beta),
\end{align}
which shows that $\JJ_\Omega^{nod}$ is strictly increasing around every $\lambda > -\lambda_2$.

To complete the proof of $i)$, it remains to show that $\JJn$ is locally Lipschitz continuous. To this end, we first prove \eqref{der}.
For the first inequality, let $\lambda > -\lambda_2$ and let $u \in \NN_\lambda^{nod}(\Omega)$ be any function such that $J_\lambda(u,\Omega) = \JJn(\lambda)$ (at least one such nodal action ground state exists by Proposition \ref{Prop Nodal}). For every $\eps >0$, we have, as in \eqref{interm},
\begin{align*}
\JJ_\Omega^{nod}(\lambda+\varepsilon)-\JJ_\Omega^{nod}(\lambda)
&\le
J_{\lambda+\varepsilon}(n_{\lambda+\varepsilon}(u^+)u^+ + n_{\lambda+\varepsilon}(u^-)u^-,\Omega)-J_{\lambda}(u,\Omega)
\\
&= \kappa\Big[\Big(n_{\lambda+\varepsilon}(u^+)^p-1\Big)\|u^+\|_p^p + \Big(n_{\lambda+\varepsilon}(u^-)^p-1\Big)\|u^-\|_p^p \Big].
\end{align*}
Since, as $\eps \to 0^+$,
\[
n_{\lambda+\varepsilon}(u^\pm)^p = 1 + \frac{\eps p}{p-2} \frac{\|u^\pm\|_2^2}{\|u^\pm\|_p^p} + o(\eps),
\]
we have
\begin{align*}
\JJ_\Omega^{nod}(\lambda+\varepsilon)-\JJ_\Omega^{nod}(\lambda) &\le \kappa \left( \frac{\eps p}{p-2}\frac{\|u^+\|_2^2}{\|u^+\|_p^p} + o(\eps)\right)\|u^+\|_p^p + \kappa \left( \frac{\eps p}{p-2}\frac{\|u^-\|_2^2}{\|u^-\|_p^p} + o(\eps)\right)\|u^-\|_p^p \\
&= \frac\eps2 \|u\|_2^2 +o(\eps).
\end{align*}
Therefore, for all $u\in \NN^{nod}_\lambda(\Omega)$,
\[
\limsup_{\varepsilon\to0^+}
\frac{\JJ_\Omega^{nod}(\lambda+\varepsilon)-\JJ_\Omega^{nod}(\lambda)}{\varepsilon}
\le \frac12 \|u\|_2^2,
\]
proving the first part of \eqref{der}.

The argument for the last inequality in \eqref{der} is the same, after noticing that $n_{\lambda+\eps}(u) >0$ for every negative $\eps$ small enough.

We can now prove that $\JJn$ is locally Lipschitz continuous on $[-\lambda_2,+\infty)$ (the claim is trivial on $(-\infty,-\lambda_2]$). Note first that, by the first inequality in \eqref{der}, the fact that $\|u\|_2^p\leq C \|u\|_p^p=2pC\JJn(\lambda)/(p-2)$ if $u$ is a nodal action ground state in $\NN_\lambda^{nod}(\Omega)$ and the continuity of $\JJn$, for every compact interval $K\subset[-\lambda_2,+\infty)$ there exists a constant $L>0$ such that
\begin{equation}
\label{limsup}
\sup_{\lambda\in K}\limsup_{\varepsilon\to0^+}\frac{\JJn(\lambda+\varepsilon)-\JJn(\lambda)}{\varepsilon}\leq\frac L2.
\end{equation}
Then, given $\alpha\in K$, consider the function $f(\beta):=\JJn(\beta)-\JJn(\alpha)-L(\beta-\alpha)$, defined for every $\beta \in K$ such that $\beta\geq\alpha$. Assume by contradiction that there exists $\beta$ such that $f(\beta)>0$. Since, by definition of $f$ and $L$, $f(s)$ is strictly smaller than $f(\alpha)=0$ for every $s$ in a right neighborhood of $\alpha$, then $f$ would have a minimum point $\bar\beta$ in the interior of $K$. But this is impossible, since at $\bar\beta$ it should be $\displaystyle \limsup_{\varepsilon\to0^+}\frac{\JJn(\bar\beta+\varepsilon)-\JJn(\bar\beta)}\varepsilon\geq L$, contradicting \eqref{limsup}. Hence, $f$ is non-positive on $K$, that is $\JJn$ is $L$-Lipschitz continuous on $K$. This completes the proof of $i)$.

Finally, since $\JJ_\Omega^{nod}$ is locally Lipschitz continuous,
it is in particular differentiable for almost every $\lambda > -\lambda_2$. If $\JJ_\Omega^{nod}$ is differentiable at some $\lambda$, all inequalities in \eqref{der} are equalities,
and $Q_p^{nod}(\lambda)$ is a singleton, concluding the proof of $ii)$.
\end{proof}

We now turn to the asymptotic behavior of $\JJn(\lambda)$.

\begin{proposition}
\label{lem:J}
Let $\mu_N$ be the number defined in \eqref{eq:muN}. Then
\begin{equation}
\label{J/lambda}
\lim_{\lambda\to+\infty}\frac{\JJ_\Omega^{nod}(\lambda)}{\lambda}=
\begin{cases}
+\infty & \text { if }p\in\left(2,2+\frac4N\right),
\\
\mu_N & \text{ if }p=2+\frac4N,
\\
0 & \text { if }p\in\left(2+\frac4N,2^*\right).
\end{cases}
\end{equation}
\end{proposition}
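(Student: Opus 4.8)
The plan is to compare $\JJn(\lambda)$ with the action ground state level on the whole space, $\JJ_{\R^N}(\lambda) := \inf_{u \in \NN_\lambda(\R^N)} J_\lambda(u,\R^N)$, and to show that $\JJn(\lambda)$ is asymptotic to $2\JJ_{\R^N}(\lambda)$ as $\lambda \to +\infty$. The starting point is the exact scaling on $\R^N$: the dilation $u \mapsto \lambda^{1/(p-2)} u(\sqrt\lambda\,\cdot)$ maps $\NN_1(\R^N)$ bijectively onto $\NN_\lambda(\R^N)$ and, via \eqref{form_J} and a change of variables, multiplies the action by $\lambda^{\theta}$, where $\theta := \f{p}{p-2} - \f N2$. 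This yields $\JJ_{\R^N}(\lambda) = \lambda^\theta \ell_p$ with $\ell_p := \JJ_{\R^N}(1)$, and one checks that $\theta > 1$, $\theta = 1$, $\theta < 1$ according to whether $p < 2 + \f4N$, $p = 2 + \f4N$, $p > 2+\f4N$ (since $p \mapsto \f p{p-2}$ is decreasing and equals $\f N2 + 1$ at the critical exponent). Crucially, at $p = 2 + \f4N$ the definition \eqref{eq:muN} reads exactly $\mu_N = 2\ell_p$, and a short Gagliardo--Nirenberg argument applied to the Nehari identity $\|\nabla u\|_2^2 + \|u\|_2^2 = \|u\|_p^p$ shows $\ell_p \in (0,\infty)$.

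For the lower bound I would argue by restriction. Given any $u \in \NN_\lambda^{nod}(\Omega)$, extend it by $0$ to $\R^N$; then $u^+$ and $u^-$ still satisfy the Nehari identity on $\R^N$ (all norms are unchanged), so $u^\pm \in \NN_\lambda(\R^N)$, and since they have disjoint supports $J_\lambda(u,\Omega) = J_\lambda(u^+,\R^N) + J_\lambda(u^-,\R^N) \ge 2\JJ_{\R^N}(\lambda)$. Taking the infimum gives the clean bound $\JJn(\lambda) \ge 2\JJ_{\R^N}(\lambda) = 2\lambda^\theta \ell_p$ valid for every $\lambda > 0$.

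For the upper bound I would use concentrating test functions. Fix $\eps > 0$ and, by density of $C_c^\infty(\R^N)$ together with the continuity of the projection $n_1$, choose a nonnegative $w \in \NN_1(\R^N) \cap C_c^\infty(\R^N)$ with $J_1(w,\R^N) \le \ell_p + \eps$; let $R$ bound its support. The scaled function $w^\lambda(x) := \lambda^{1/(p-2)} w(\sqrt\lambda\,x)$ lies in $\NN_\lambda(\R^N)$, has support in a ball of radius $R/\sqrt\lambda$, and satisfies $J_\lambda(w^\lambda,\R^N) = \lambda^\theta J_1(w,\R^N)$. Fixing two distinct interior points $x_1,x_2 \in \Omega$, for $\lambda$ large the translates $w^\lambda(\cdot - x_1)$ and $w^\lambda(\cdot - x_2)$ have disjoint supports contained in $\Omega$; hence $u_\lambda := w^\lambda(\cdot - x_1) - w^\lambda(\cdot - x_2) \in \NN_\lambda^{nod}(\Omega)$ and $\JJn(\lambda) \le J_\lambda(u_\lambda,\Omega) = 2\lambda^\theta J_1(w,\R^N) \le 2\lambda^\theta(\ell_p + \eps)$.

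Combining the two bounds and dividing by $\lambda$ gives $2\lambda^{\theta-1}\ell_p \le \JJn(\lambda)/\lambda \le 2\lambda^{\theta-1}(\ell_p+\eps)$ for all large $\lambda$. When $p < 2 + \f4N$ one has $\theta > 1$, so the lower bound alone forces the ratio to $+\infty$; when $p > 2 + \f4N$ one has $\theta < 1$, so the upper bound forces it to $0$; when $p = 2 + \f4N$ one has $\theta = 1$, so the ratio is squeezed between $2\ell_p = \mu_N$ and $\mu_N + 2\eps$, and letting $\eps \to 0$ yields the limit $\mu_N$. The delicate points are purely bookkeeping — verifying the scaling exponent $\theta$, the identification $\mu_N = 2\ell_p$, and the strict positivity of $\ell_p$ that makes the subcritical lower bound genuinely diverge — whereas the concentration construction of the upper bound is routine once one checks that the shrinking supports fit disjointly inside $\Omega$.
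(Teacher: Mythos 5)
Your proof is correct, and it reaches the conclusion by a genuinely different route in the lower bounds. The paper starts from $\JJn(\lambda)\ge 2\JJ_\Omega(\lambda)$ and then invokes the action--energy inequality $\JJ_\Omega(\lambda)\ge \ee_\Omega(2\mu)+\lambda\mu$ of \cite{DST}, together with the (nontrivial, imported) fact that $\ee_\Omega(\mu_N)=0$ at the critical exponent; you instead bound $\JJ_\Omega(\lambda)$ from below by the whole-space level $\JJ_{\R^N}(\lambda)$ via extension by zero, and then use the exact scaling law $\JJ_{\R^N}(\lambda)=\lambda^{\theta}\,\JJ_{\R^N}(1)$ with $\theta=\f{p}{p-2}-\f N2$, noting that \eqref{eq:muN} literally reads $\mu_N=2\JJ_{\R^N}(1)$ when $p=2+\f4N$. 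This makes the lower bounds self-contained (modulo Gagliardo--Nirenberg, which you correctly use to get $\JJ_{\R^N}(1)>0$, the point that makes the subcritical bound $2\lambda^{\theta-1}\ell_p$ genuinely diverge), at the price of introducing the whole-space problem; the paper's version instead stays on $\Omega$ but leans on two cited results. The upper bounds are essentially identical in spirit: both concentrate a compactly supported near-minimizer at two distinct interior points with disjoint supports (the functions $v_\lambda$ that the paper imports from \cite[Lemma 2.4]{DST} are precisely your rescaled test functions), and your scaling computation of the exponent $\theta$ and of the three regimes $\theta>1$, $\theta=1$, $\theta<1$ checks out. The only step worth writing in full is the construction of a \emph{nonnegative}, compactly supported near-minimizer $w\in\NN_1(\R^N)$: take a near-minimizer $v$, replace it by $|v|$, cut off and mollify, and reproject with $n_1$ (the continuity of $n_1$ under $H^1$-convergence does the rest); smoothness is not actually needed, only compact support and nonnegativity, the latter being essential so that the two translates give $u_\lambda^{\pm}\in\NN_\lambda(\Omega)$ separately.
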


\begin{proof}
It is just a slight variant of \cite[Lemma 2.4]{DST}, where the analogous properties are proved for $\JJ_\Omega$. We will therefore be rather sketchy here.  For every $\mu>0$, set
\[
\EE_\Omega(\mu):=\inf_{u\in\Hmu(\Omega)}E(u,\Omega).
\]
In \cite[Proposition 2.3]{DST}, it has been shown that for every $p\in (2,2^*)$, every $\lambda \in \R$ and every $\mu>0$,
\[
\JJ_\Omega(\lambda)\ge \ee_\Omega(2\mu)+\lambda\mu
\]
(be careful that in \cite{DST} the manifold $\MM_\mu(\Omega)$ is defined as the set of functions in $H_0^1(\Omega)$ whose $L^2$ norm is equal to $2\mu$, whereas here functions in $\MM_\mu(\Omega)$ have $L^2$ norm equal to $\mu$).
If $u$ is any element in $\NN_{\lambda}^{nod}(\Omega)$,
then $u^\pm \in \NN_{\lambda}(\Omega)$ and, by the previous inequality,
\[
J_\lambda(u,\Omega) = J_\lambda(u^+,\Omega)+J_\lambda(u^-,\Omega) \ge 2\JJ_\Omega(\lambda) \ge 2(\ee_\Omega(2\mu)+\lambda\mu).
\]
Taking the infimum over $u \in \NN_{\lambda}^{nod}(\Omega)$ we conclude that for every $p\in (2,2^*)$, every $\lambda \in \R$ and every $\mu>0$,
\begin{equation}
\label{myth}
\JJn(\lambda)\ge 2\left(\ee_\Omega(2\mu)+\lambda\mu\right).
\end{equation}
We now distinguish cases according to the value of $p$.
\medskip

\noindent{\em Case 1: $p\in (2,2+4/N)$.} It is well known that in this case $\ee_\Omega(2\mu)$ is finite for every $\mu>0$. Therefore, by \eqref{myth}, 
\[
\liminf_{\lambda\to +\infty} \frac{\JJ_\Omega^{nod}(\lambda)}{\lambda} \ge \liminf_{\lambda\to +\infty} \frac{2(\ee_\Omega(2\mu) +\lambda\mu)}{\lambda} = 2\mu.
\]
Since $\mu$ is arbitrary, the conclusion follows.
\medskip
	
\noindent{\em Case 2: $p=2+4/N$.} By \cite[Lemma 2.1]{DST}, $\ee_\Omega(\mu_N) = 0$, so that again by \eqref{myth}, for every $\lambda > 0$, we have
\[
\JJ_\Omega^{nod}(\lambda)\ge 2\left(\ee(\mu_N)+\lambda\frac{\mu_N}2\right)=\lambda\mu_N,
\]
yielding $\JJ_\Omega^{nod}(\lambda)/\lambda\ge \mu_N$. Conversely, in \cite[Lemma 2.4]{DST} it is shown that, for sufficiently large $\lambda$, there exists a function $v_\lambda \in \NN_\lambda(\Omega)$ compactly supported in a small ball $B$ contained in $\Omega$ and such that $\|v_\lambda\|_2^2=\mu_N$ and $E(v_\lambda,B) / \lambda=o(1)$ as $\lambda \to +\infty$. Let $w_\lambda$ be a translation of $v_\lambda$  supported in another ball contained in $\Omega$ but disjoint from $B$. Clearly, $v_\lambda - w_\lambda \in \NN_\lambda^{nod}(\Omega)$,
$\|v_\lambda - w_\lambda\|_2^2=2\mu_N$ and $E(v_\lambda - w_\lambda,\Omega) / \lambda=o(1)$ as $\lambda \to +\infty$. Hence,
\[
\limsup_{\lambda\to+\infty}\frac{\JJ_\Omega^{nod}(\lambda)}\lambda\leq \limsup_{\lambda\to+\infty}\frac{J_\lambda(v_\lambda -w_\lambda,\Omega)}\lambda
=\lim_{\lambda\to+\infty}\frac{E(v_\lambda -w_\lambda,\Omega) + \lambda\mu_N}\lambda = \mu_N,
\]
and Case 2 is proved.
\medskip

\noindent{\em Case 3:  $p\in\left(2+4/N,2^*\right)$.} Again as in \cite[Lemma 2.4]{DST}, for large $\lambda$ we can construct a function $v_\lambda \in \NN_\lambda(\Omega)$ supported in a small ball $B \subset \Omega$ and such that $J(v_\lambda,\Omega) /\lambda \to 0 $ as $\lambda \to +\infty$ (thanks to $p > 2+4/N$). It suffices now to define $w_\lambda$ as in Case 2 to see that $v_\lambda-w_\lambda \in \NN_\lambda^{nod}(\Omega)$ and
\[
\limsup_{\lambda \to +\infty} \frac{\JJ_\Omega^{nod}(\lambda)}{\lambda} \le \lim_{\lambda \to +\infty} \frac{J_\lambda(v_\lambda -w_\lambda,\Omega)}{\lambda} =
2\lim_{\lambda \to +\infty} \frac{J_\lambda(v_\lambda,\Omega)}\lambda = 0.
\]
Since (for $\lambda >0$), $\JJ_\Omega^{nod}(\lambda)/\lambda \ge 0$ by \eqref{down}, the proof is complete.
\end{proof} 

\section{Proof of Theorem \ref{thm:masses}}
\label{sec:masses}

The purpose of this section is to prove the characterization of the masses of all action ground states and nodal action ground states as stated in Theorem \ref{thm:masses}. 

To this end, recall the definition of the sets $M_p(\Omega), M_p^{nod}(\Omega)$ given in \eqref{M}, and note that
\[
M_p(\Omega)=\bigcup_{\lambda\in \R}Q_p(\lambda),\qquad M_p^{nod}(\Omega)=\bigcup_{\lambda\in\R}Q_p^{nod}(\lambda),
\]
where $Q_p(\lambda), Q_p^{nod}(\lambda)$ are the sets in Proposition \ref{prop_J} and in Proposition \ref{deraction}, respectively. Furthermore, for every $p\in(2,2^*)$, we denote
\begin{align}
\label{mup}
\mu_p :=&\, 2\sup\left\{\JJ_\Omega'(\lambda) \mid \JJ_\Omega \text{ is differentiable at } \lambda \right\}, \\
\mu_p^{nod} :=&\, 2\sup\left\{(\JJ_\Omega^{nod})'(\lambda) \mid \JJ_\Omega^{nod} \text{ is differentiable at } \lambda \right\},\label{mupnod}
\end{align}
and we recall that $\mu_N$ is the number defined in \eqref{eq:muN}.

\begin{remark}
\label{darboux}
We will see that although $\JJ_\Omega$ (respectively  $\JJ_\Omega^{nod}$) may fail to be differentiable on a set of measure zero, {\em every} value $\mu \in (0, \frac12 \mu_p)$ (resp. $(0, \frac12 \mu_p^{nod}$)) is achieved by $\JJ_\Omega'$ (resp. $(\JJ_\Omega^{nod})'$). This is the Darboux-type property we mentioned in the Introduction.
\end{remark}

The rest of this section is devoted to show that $\mu_p,\mu_p^{nod}$ above provide the desired thresholds of Theorem \ref{thm:masses}, and that they are equal to $+\infty$ when $p<2+\frac4N$ and finite when $p\geq2+\frac4N$. Since the argument is exactly the same both for action ground states and for nodal action ground states, here we report it in details only for the nodal case. The argument is divided in the following series of lemmas.

\begin{lemma}
\label{minimum}
For every $p\in (2,2^*)$, let
\begin{equation}
\label{defs}
g(\lambda) := \liminf_{\eps \to 0^-}\frac{\JJn(\lambda+\eps)-\JJn(\lambda)}{\eps}\qquad\text{and} \qquad \bargi := 2\sup_{\lambda\in \R}g(\lambda).
\end{equation}
Then the following holds.
\begin{itemize}
\item[1)] if $\mu \in (0,\bargi)$, then $\mu \in M_p^{nod}(\Omega)$;
\item[2)] if $\mu > \bargi$, then $\mu \notin M_p^{nod}(\Omega)$.
\end{itemize}
\end{lemma}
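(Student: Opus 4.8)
The plan is to exploit the one-sided derivative estimates from Proposition~\ref{deraction}, together with the continuity and monotonicity of $\JJn$ established there, to show that $g$ is essentially the right derivative and that its range of achievable values (doubled) coincides with $M_p^{nod}(\Omega)$. The key observation is that the quantity $g(\lambda)$ defined in \eqref{defs} is, at every point of differentiability, equal to $(\JJn)'(\lambda) = \frac12 \|u_\lambda\|_2^2$ for the (then unique) nodal action ground state, by the chain of inequalities in \eqref{der}. Thus $\bargi$ is morally $\mu_p^{nod}$, and proving the lemma amounts to a Darboux-type (intermediate value) argument for the masses.

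For part~1), I would argue as follows. Fix $\mu \in (0,\bargi)$, so that $\frac{\mu}{2} < \sup_\lambda g(\lambda)$. I would like to find $\lambda$ with $\frac12 \|u_\lambda\|_2^2 = \frac{\mu}{2}$ for some nodal action ground state $u_\lambda$. Since $\JJn$ is locally Lipschitz (hence differentiable a.e.) and $\JJn \equiv 0$ on $(-\infty,-\lambda_2]$, the function $\JJn$ starts with zero slope and, by definition of $\bargi$, attains slopes arbitrarily close to $\frac{\bargi}{2} > \frac{\mu}{2}$. The main tool is that for a locally Lipschitz function, any value strictly between two attained values of the a.e.-derivative is itself attained on a set of positive measure (indeed, if $\JJn' < \frac{\mu}{2}$ a.e.\ on an interval where it is defined, then the Lipschitz bound propagates and the slope $\frac{\mu}{2}$ could never be approached). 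More precisely, I would use the standard fact that for an absolutely continuous $h$, the essential range of $h'$ contains the open interval between $\inf$ and $\sup$ of the difference quotients; combined with the singleton property of $Q_p^{nod}(\lambda)$ at a.e.\ $\lambda$, this yields a point $\lambda^\ast$ of differentiability with $(\JJn)'(\lambda^\ast) = \frac{\mu}{2}$, whence the corresponding unique ground state has mass exactly $\mu$, so $\mu \in Q_p^{nod}(\lambda^\ast) \subseteq M_p^{nod}(\Omega)$.

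For part~2), suppose $\mu > \bargi$ and assume for contradiction that $\mu \in M_p^{nod}(\Omega)$, i.e.\ $\mu \in Q_p^{nod}(\lambda)$ for some $\lambda > -\lambda_2$ (the case $\lambda \le -\lambda_2$ is excluded since there are no nodal ground states there by Proposition~\ref{properties_NGS}). Then there is a nodal action ground state $u$ at this $\lambda$ with $\|u\|_2^2 = \mu$. By the last inequality in \eqref{der},
\[
\frac{\mu}{2} = \frac12 \|u\|_2^2 \le \frac12 \sup Q_p^{nod}(\lambda) \le \liminf_{\eps \to 0^-}\frac{\JJn(\lambda+\eps)-\JJn(\lambda)}{\eps} = g(\lambda) \le \frac{\bargi}{2},
\]
so $\mu \le \bargi$, a contradiction. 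This direction is essentially immediate from \eqref{der}.

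The main obstacle is part~1), specifically making rigorous the intermediate-value step for the a.e.-defined derivative of the merely Lipschitz function $\JJn$. The subtlety is that $\JJn$ need not be $C^1$, so one cannot simply invoke the classical Darboux theorem for derivatives; instead one must argue through difference quotients and the positive-measure structure of the set where $(\JJn)'$ is defined, using that $Q_p^{nod}$ is a singleton almost everywhere to pin the mass to the value of the derivative. I expect this to require a careful real-analysis lemma (essentially that the essential range of the derivative of a monotone Lipschitz function is an interval up to its endpoints), and this is where the bulk of the work lies; the reduction to that lemma via \eqref{der} and Proposition~\ref{deraction} is otherwise routine.
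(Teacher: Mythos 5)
Your part~2) is correct and is exactly the paper's argument: it follows immediately from the last inequality in \eqref{der}. Part~1), however, rests on a real-analysis lemma that is false. You claim that for an absolutely continuous (even monotone Lipschitz) function $h$, the essential range of $h'$ contains every value strictly between the infimum and supremum of the difference quotients. A counterexample: let $h(x)=\int_0^x\chi(t)\,dt$ where $\chi$ is the indicator of $\bigcup_k[2k,2k+1]$. Then $h$ is nondecreasing and $1$-Lipschitz, its difference quotients fill $[0,1]$, but $h'\in\{0,1\}$ a.e., so the essential range of $h'$ is not an interval and misses $\tfrac12$ entirely. Hence knowing that the a.e.-derivative of $\JJn$ gets close to $0$ near $-\lambda_2$ and close to $\bargi/2$ somewhere does not, by general measure theory, produce a differentiability point where $(\JJn)'=\mu/2$. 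This is precisely the Darboux-type difficulty the paper flags, and it cannot be resolved by generic properties of Lipschitz functions.

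The missing idea is to use the specific one-sided structure in \eqref{der}, namely that at \emph{every} $\lambda$ the upper right Dini derivative of $\JJn$ is bounded above by the lower left Dini derivative (this is what fails for the counterexample above). The paper exploits this by a tilted-minimization argument: for $\mu\in(0,\bargi)$ one picks $\barlam$ with $\mu<2g(\barlam)$ and minimizes $f_\mu(\lambda):=\JJn(\lambda)-\frac{\mu}{2}\lambda$ over the compact interval $[-\lambda_2,\barlam]$. The upper bound \eqref{up} rules out the left endpoint and the choice of $\barlam$ rules out the right endpoint, so the minimum point $\widetilde\lambda$ is interior; there one gets
\[
2\limsup_{\eps\to0^-}\frac{\JJn(\widetilde\lambda+\eps)-\JJn(\widetilde\lambda)}{\eps}\le\mu\le 2\liminf_{\eps\to0^+}\frac{\JJn(\widetilde\lambda+\eps)-\JJn(\widetilde\lambda)}{\eps},
\]
which combined with the reversed chain \eqref{der} forces differentiability at $\widetilde\lambda$ with $(\JJn)'(\widetilde\lambda)=\mu/2$, and then every nodal action ground state at $\widetilde\lambda$ has mass exactly $\mu$. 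You correctly identified where the bulk of the work lies, but the lemma you propose to fill it is untrue; the argument must go through \eqref{der} and the minimization of $f_\mu$.
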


\begin{proof} Note first that, by Proposition \ref{Prop Nodal} and Proposition \ref{deraction}, $\overline{g}>0$. We fix any $\mu \in  (0,\bargi)$ and we take $\barlam$ such that $\mu < 2g(\barlam) \le \bargi$. Note that if $\bargi$ is not attained, then $\barlam$ exists by definition; if $\bargi$ is attained, we take $\barlam$ such that $2g(\barlam) = \bargi$. Since $\mu < 2g(\barlam)$, there exists $\delta >0$ such that for every $\eps \in (-\delta,0)$,
\[
2\frac{\JJn(\barlam+\eps)-\JJn(\barlam)}{\eps} > \frac12(\mu + 2g(\barlam)),
\]
or
\begin{equation}
\label{less}
\JJn(\barlam+\eps) < \JJn(\barlam) + \frac{\eps}4 (\mu + 2g(\barlam)).
\end{equation}
Define now the function $f_\mu : [-\lambda_2, \barlam] \to \R$ as
\[
f_\mu (\lambda) = \JJn(\lambda) - \frac\mu2\lambda.
\]
Since $f_\mu$ is continuous, it has a global minimum point $\widetilde\lambda \in [-\lambda_2, \barlam]$. Notice that $\widetilde\lambda \ne -\lambda_2$ because, by \eqref{up}, $f_\mu(\lambda) < f_\mu(-\lambda_2)$ in a right neighborhood of $-\lambda_2$. Similarly, $\widetilde\lambda \ne \barlam$ because for $\eps \in (-\delta, 0)$, by \eqref{less},
\begin{align*}
f_\mu(\widetilde\lambda) \le f_\mu (\barlam +\eps ) &= \JJn(\barlam+\eps) -\frac\mu2\barlam -\frac\eps 2\mu < \JJn(\barlam) + \frac{\eps}4 (\mu + 2g(\barlam))  -\frac\mu  2\barlam -\frac\eps 2\mu \\
&= f_\mu(\barlam) + \eps\,\frac{2g(\barlam)-\mu}{4} < f_\mu(\barlam).
\end{align*}
Therefore $\widetilde\lambda \in (-\lambda_2, \barlam)$ and $f_\mu(\widetilde\lambda + \eps) - f_\mu(\widetilde\lambda) \ge 0$
for every $|\eps|$ small enough. Now, if $\eps <0$,
\[
\begin{split}
\frac{\JJn(\widetilde\lambda + \eps)-\JJn(\widetilde\lambda )}{\eps}  - \frac\mu2=&\,
\frac{\JJn(\widetilde\lambda + \eps) -\mu\widetilde\lambda/2 -\eps\mu/2 -\JJn(\widetilde\lambda )+\mu\widetilde\lambda/2 }{\eps}\\
=&\,\frac{f_\mu(\widetilde\lambda+\eps)-f_\mu(\widetilde\lambda)}{\eps}\le 0,
\end{split}
\]
whence
\[
2\limsup_{\eps \to 0^-} \frac{\JJn(\widetilde\lambda + \eps)-\JJn(\widetilde\lambda )}{\eps} \le \mu.
\]
The same computation, for $\eps >0$ leads to
\[
2\liminf_{\eps \to 0^+} \frac{\JJn(\widetilde\lambda + \eps)-\JJn(\widetilde\lambda )}{\eps} \ge \mu.
\]
These inequalities, coupled with \eqref{der} show that $\JJn$ is differentiable at $\widetilde\lambda$ and that $(\JJn)'(\widetilde\lambda) = \mu$.
Thus, every  nodal action ground state in $\NN_{\widetilde\lambda}^{nod}(\Omega)$ has mass $\mu$, and $\mu \in M_p^{nod}(\Omega)$. This proves 1).

To prove the second statement, just notice that if $\mu > \bargi$ is the mass of a nodal action ground state $u$ in some $\NN_\lambda^{nod}(\Omega)$, then by \eqref{der},
\[
\bargi < \mu \le \sup Q_p^{nod}(\lambda) \le 2\liminf_{\eps \to 0^-}  \frac{\JJn(\lambda + \eps)-\JJn(\lambda )}{\eps} = 2g(\lambda) \le \bargi,
\]
which is impossible.
\end{proof}

\begin{remark} 
\label{rem:GlobMin}	
The main argument in the proof of the preceding lemma consists in minimizing, given $\mu$, the function $f_\mu(\lambda) = \JJn(\lambda) - \frac\mu 2\lambda$ over the compact set $[-\lambda_2,\barlam]$. This approach is used to unify the three cases of $p$ $L^2$-subcritical, $L^2$-critical or $L^2$-supercritical. We note, for future reference, that for $L^2$-subcritical $p$ and all $\mu$, or for $L^2$-critical $p$ and $\mu < 2\mu_N$, the function $f_\mu$ can be minimized on $[-\lambda_2, +\infty)$, obtaining then a global minimum. Indeed, in these cases $f_\mu$ is coercive, as one immediately sees by writing
\[
f_\mu(\lambda) = \lambda\left( \frac{\JJn(\lambda)}{\lambda}-\frac\mu 2\right)
\]
and taking into account the asymptotic behavior of $\JJn(\lambda)/\lambda$ described in \eqref{J/lambda}. Actually one can easily see that  $f_\mu$ can be minimized on $\R$ (as $\JJn(\lambda) =0$ for $\lambda \le -\lambda_2$).
\end{remark}

\begin{lemma} 
\label{sups}
For every $p\in (2,2^*)$, let $g(\lambda)$ and $\bargi$ be as in \eqref{defs}. Then, recalling \eqref{mupnod},
\[
\mu_p^{nod} = \bargi.
\]
\end{lemma}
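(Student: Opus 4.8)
The plan is to establish the two inequalities $\mu_p^{nod}\le\bargi$ and $\mu_p^{nod}\ge\bargi$ separately, the second one being the substantial part.

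For the inequality $\mu_p^{nod}\le\bargi$ I would argue as follows. Fix any $\lambda$ at which $\JJn$ is differentiable. Then the left difference quotient converges, so that $g(\lambda)=\liminf_{\eps\to0^-}\frac{\JJn(\lambda+\eps)-\JJn(\lambda)}{\eps}=(\JJn)'(\lambda)$; equivalently, all the inequalities in \eqref{der} collapse to equalities at such $\lambda$. Hence every value $(\JJn)'(\lambda)$ entering the supremum \eqref{mupnod} defining $\mu_p^{nod}$ coincides with $g(\lambda)$, and is therefore bounded above by $\sup_{\lambda\in\R}g(\lambda)=\bargi/2$ (recall \eqref{defs}). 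Taking the supremum over all differentiability points and multiplying by $2$ yields $\mu_p^{nod}\le\bargi$.

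For the reverse inequality $\mu_p^{nod}\ge\bargi$ the key observation is that the proof of Lemma \ref{minimum} already furnishes the required \emph{differentiability} points, not merely the membership $\mu\in M_p^{nod}(\Omega)$. Indeed, for each $\mu\in(0,\bargi)$ the minimization of $f_\mu(\lambda)=\JJn(\lambda)-\frac\mu2\lambda$ over $[-\lambda_2,\barlam]$ carried out there produces an interior minimizer $\widetilde\lambda\in(-\lambda_2,\barlam)$ at which $\JJn$ is differentiable and whose derivative equals $\mu/2$. Consequently the set $\{(\JJn)'(\lambda)\mid \JJn\text{ is differentiable at }\lambda\}$ contains the whole interval $(0,\bargi/2)$, so its supremum is at least $\bargi/2$; multiplying by $2$ gives $\mu_p^{nod}\ge\bargi$. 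Combining the two inequalities proves $\mu_p^{nod}=\bargi$.

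The delicate point — and what I expect to be the main obstacle — is exactly this second inequality. A priori the supremum $\sup_{\lambda}g(\lambda)$ could be approached only along a sequence of points where $\JJn$ fails to be differentiable (that set is negligible by local Lipschitz continuity, but it need not be finite), in which case $\mu_p^{nod}$, which by definition sees only differentiability points, might be strictly smaller than $\bargi$. The Darboux-type mechanism inside Lemma \ref{minimum} is precisely what rules this out: minimizing $f_\mu$ forces the one-sided derivatives of $\JJn$ to pinch together at the interior minimizer, thereby realizing every prescribed slope below $\bargi/2$ as a genuine derivative. I would therefore make sure to record, when proving Lemma \ref{minimum}, that its argument yields an honest differentiability point with the prescribed slope, so that it can be quoted cleanly here.
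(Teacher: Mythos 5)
Your proof is correct, and the first inequality $\mu_p^{nod}\le\bargi$ is exactly the paper's argument (at a differentiability point the left difference quotient converges, so $(\JJn)'(\lambda)=g(\lambda)\le\sup g$). For the reverse inequality the two routes diverge. The paper uses the local Lipschitz continuity of $\JJn$ together with the fundamental theorem of calculus: for $\eps<0$ one writes $\JJn(\lambda)-\JJn(\lambda+\eps)=\int_{\lambda+\eps}^{\lambda}(\JJn)'(t)\,dt\le(-\eps)\sup_A(\JJn)'$, divides by $-\eps$ and passes to the $\liminf$, obtaining $2g(\lambda)\le\mu_p^{nod}$ pointwise for \emph{every} $\lambda$. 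You instead re-enter the minimization mechanism of Lemma \ref{minimum} and observe that its interior minimizer $\widetilde\lambda$ is an honest differentiability point of $\JJn$ with $(\JJn)'(\widetilde\lambda)=\mu/2$ (the pinching of the one-sided quotients against \eqref{der} does give exactly this), so the derivative attains every value in $(0,\bargi/2)$ and its supremum is at least $\bargi/2$. Both arguments are sound and non-circular, since Lemma \ref{minimum} precedes the present lemma and does not use it. The paper's route is slightly more self-contained and handles the worry you raise (the supremum of $g$ being approached only along non-differentiability points) by an averaging argument rather than by exhibiting explicit differentiability points; your route has the advantage of making the Darboux-type property completely explicit, at the cost of requiring that the conclusion ``$\JJn$ is differentiable at $\widetilde\lambda$ with derivative $\mu/2$'' be recorded as part of the statement or proof of Lemma \ref{minimum}, as you correctly note.
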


\begin{proof} We provide a proof for completeness.
Let $A \subseteq \R$ be the set of points where $\JJn$ is differentiable, so that  $\displaystyle \mu_p^{nod} = 2\sup_A  (\JJn)'$. Obviously, for every  $\lambda \in A$, $(\JJn)'(\lambda) = g(\lambda)$, and therefore
\[
\mu_p^{nod}= 2\sup_{\lambda \in A} (\JJn)'(\lambda) = 2\sup_{\lambda \in A} g(\lambda) \le \bargi.
\]
Conversely, let $\lambda \in \R$ and observe that, for every $\eps <0$, since $\JJn$ is locally Lipschitz continuous,
\[
\JJn(\lambda) - \JJn(\lambda +\eps) = \int_{\lambda+\eps}^\lambda (\JJn)'(t)\,dt \le (-\eps) \sup_{A\cap [\lambda+\eps,\lambda]}  (\JJn)'
\le  (-\eps) \sup_A   (\JJn)' =-\eps \frac{\mu_p^{nod}}2.
\]
Dividing by $-\eps >0$ we obtain
\[
\frac{\JJn(\lambda +\eps) - \JJn(\lambda) }{\eps} \le \frac{\mu_p^{nod}}2,
\]
whence
\[
2g(\lambda) = 2\liminf_{\eps \to 0^-} \frac{\JJn(\lambda +\eps) - \JJn(\lambda) }{\eps} \le \mu_p^{nod}.
\]
Since this holds for every $\lambda \in \R$, we obtain $\bargi \le \mu_p^{nod}$.
\end{proof}

\begin{remark}
\label{newform}
In view of the previous lemma, the conclusion of Lemma \ref{minimum} can be written as
\begin{itemize}
\item[1)] if $\mu \in (0, \mu_p^{nod})$, then $\mu \in M_p^{nod}(\Omega)$;
\item[2)] if $\mu > \mu_p^{nod}$, then $\mu \notin M_p^{nod}(\Omega)$.
\end{itemize}
\end{remark}

\begin{lemma}
\label{J'bounds}
There results
\[
\mu_p^{nod} \begin{cases} =+\infty & \text{ if } p < 2 +4/N \\ < +\infty & \text{ if } p \ge 2 +4/N. \end{cases}
\]
\end{lemma}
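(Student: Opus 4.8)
The plan is to read off the finiteness or infiniteness of $\mu_p^{nod}$ directly from its definition \eqref{mupnod}, namely $\mu_p^{nod}=2\sup_A(\JJn)'$, where $A$ is the full-measure set of points at which $\JJn$ is differentiable, by comparing the pointwise supremum of the derivative with the asymptotic slope $\lim_{\lambda\to+\infty}\JJn(\lambda)/\lambda$ computed in Proposition~\ref{lem:J}. The bridge between the two quantities is the fact, already contained in Proposition~\ref{deraction}, that at every $\lambda\in A$ the (unique) nodal action ground state $u$ satisfies $(\JJn)'(\lambda)=\tfrac12\|u\|_2^2$; thus $\mu_p^{nod}$ is precisely the supremum of the masses of nodal action ground states. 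Throughout I would use that, by \eqref{form_J} applied to $u^+$ and $u^-$ separately, a nodal action ground state $u$ at frequency $\lambda$ satisfies $\JJn(\lambda)=\kappa(\|\nabla u\|_2^2+\lambda\|u\|_2^2)$.

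For the subcritical case $p<2+\frac4N$ the argument is a one-line integration. Since $\JJn$ is locally Lipschitz (Proposition~\ref{deraction}) and $\JJn(-\lambda_2)=0$ (Proposition~\ref{properties_NGS}), it is absolutely continuous, so $\JJn(\lambda)=\int_{-\lambda_2}^{\lambda}(\JJn)'(t)\,dt$ for every $\lambda\ge-\lambda_2$. Were $\mu_p^{nod}$ finite, we would have $(\JJn)'\le \mu_p^{nod}/2$ almost everywhere, hence $\JJn(\lambda)\le \tfrac12\mu_p^{nod}(\lambda+\lambda_2)$, so $\JJn(\lambda)/\lambda$ would stay bounded, contradicting $\JJn(\lambda)/\lambda\to+\infty$ from \eqref{J/lambda}. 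Therefore $\mu_p^{nod}=+\infty$.

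For $p\ge 2+\frac4N$ I would argue by contradiction, assuming $\mu_p^{nod}=+\infty$. Then there is a sequence $\lambda_n\in A$ whose ground states $u_n$ have masses $\mu_n:=\|u_n\|_2^2=2(\JJn)'(\lambda_n)\to+\infty$. The only delicate step, which I expect to be the main (minor) obstacle, is to locate the $\lambda_n$: from the Hölder bound $\|u_n\|_2\le|\Omega|^{(p-2)/(2p)}\|u_n\|_p$ together with $\|u_n\|_p^p=\JJn(\lambda_n)/\kappa$ one obtains $\mu_n\le C\,(\JJn(\lambda_n))^{2/p}$, so that $\JJn(\lambda_n)\to+\infty$; since $\JJn$ is finite and continuous, a bounded subsequence of $(\lambda_n)$ would force $\JJn(\lambda_n)$ to stay bounded, hence necessarily $\lambda_n\to+\infty$. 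Once this is established the contradiction is immediate: using $\JJn(\lambda_n)=\kappa(\|\nabla u_n\|_2^2+\lambda_n\mu_n)\ge \kappa\lambda_n\mu_n$ for $\lambda_n>0$ gives $\JJn(\lambda_n)/\lambda_n\ge \kappa\mu_n\to+\infty$, incompatible with the finite limit ($\mu_N$ when $p=2+\frac4N$, or $0$ when $p>2+\frac4N$) furnished by \eqref{J/lambda}. Hence $\mu_p^{nod}<+\infty$.

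I would remark, finally, that the identical scheme proves the corresponding dichotomy for $\mu_p$, replacing $\JJn$, $-\lambda_2$ and Proposition~\ref{deraction} by $\JJ_\Omega$, $-\lambda_1$ and Proposition~\ref{prop_J}, and using the $\JJ_\Omega(\lambda)/\lambda$ asymptotics in place of \eqref{J/lambda}.
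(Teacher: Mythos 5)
Your proof is correct, and the two halves have different relationships to the paper's argument. For $p\ge 2+\frac4N$ you do essentially what the paper does: the bound $\mu\le C(\JJn(\lambda))^{2/p}\le C(\lambda+\lambda_2)^{2/(p-2)}$ from \eqref{up} controls bounded $\lambda$, and the boundedness of $\JJn(\lambda)/\lambda$ from \eqref{J/lambda} together with $\lambda\mu\le\|u\|_p^p=\JJn(\lambda)/\kappa$ controls large $\lambda$; you merely package this as a contradiction. For $p<2+\frac4N$ your route is genuinely different: the paper proves directly, via the Gagliardo--Nirenberg and Young inequalities, that \emph{every} element of $\NN_\lambda^{nod}(\Omega)$ has mass at least $C\lambda^{(2-\alpha)/(p-2)}\to+\infty$, and then invokes Remark \ref{newform}; you instead integrate $(\JJn)'\le\mu_p^{nod}/2$ over $[-\lambda_2,\lambda]$ (legitimate, since $\JJn$ is locally Lipschitz by Proposition \ref{deraction} and vanishes at $-\lambda_2$ by Proposition \ref{properties_NGS}) and contradict $\JJn(\lambda)/\lambda\to+\infty$ from Proposition \ref{lem:J}. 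This is exactly the mechanism the paper itself uses in Remark \ref{SmuN} to get $\mu_p^{nod}\ge 2\mu_N$, so it is fully within the paper's toolkit; it is shorter, but it leans on the energy-level input behind Proposition \ref{lem:J} (finiteness of $\ee_\Omega(2\mu)$ for all $\mu$), whereas the paper's version is self-contained at the level of the Nehari set and yields the extra quantitative information that all nodal Nehari elements, not just ground states, have large mass for large $\lambda$. One small wording slip: at a differentiability point $Q_p^{nod}(\lambda)$ is a singleton, meaning all nodal ground states there share the same mass, not that the ground state itself is unique; this does not affect your argument.
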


\begin{proof}
First let $p < 2 +4/N$. By Remark \ref{newform}, it is enough to show that there exist nodal action ground states of arbitrarily large mass. We will do it by estimating from below the mass of elements of $\NN_\lambda^{nod}(\Omega)$ in terms of $\lambda$. To this aim, take any $u\in \NN_\lambda^{nod}(\Omega)$. By the Gagliardo-Nirenberg inequality
\[
\|u\|_p^p\leq K_p\|u\|_{2}^{p-\alpha}\|\nabla u\|_{2}^\alpha, \qquad \alpha = N\left(\frac{p}2 -1\right),
\]
noticing that $\alpha <2$ since $p$ is $L^2$-subcritical, using the Young inequality and writing $\mu = \|u\|_2^2$, we deduce that
\[
\|\nabla u \|_2^2 +\lambda\mu= \|u\|_p^p\leq K_p  \frac{2-\alpha}2 \mu^{\frac{p-\alpha}{2}\frac{2}{2-\alpha}} + \frac\alpha{2}\|\nabla u\|_{2}^2 = 
 K_p  \frac{2-\alpha}2 \mu^{\frac{p-\alpha}{2-\alpha}} +\frac\alpha{2}\|\nabla u\|_{2}^2.
\]
Therefore
\[
\lambda\mu \le \left(1 - \frac\alpha{2}\right)\|\nabla u \|_2^2 + \lambda \mu\le  K_p  \frac{2-\alpha}2 \mu^{\frac{p-\alpha}{2-\alpha}}, 
\] 
whence $\mu \ge C \lambda^{\frac{2-\alpha}{p-2}} \to +\infty$ as $\lambda \to +\infty$.

Assume now that $p \ge 2 + 4/N$ and take again $u \in \NN_\lambda^{nod}(\Omega)$. Denoting by $C$ (possibly different but) universal constants, by \eqref{up} we notice that 
\[
\mu = \|u\|_2^2 \le C\|u\|_p^2 = C \left(\JJn(\lambda)\right)^\frac2p \le C(\lambda + \lambda_2)^{\frac2{p-2}}.
\]
This shows that $\mu$ is bounded when  $\lambda$ ranges in a bounded subset of $\R$. By \eqref{J/lambda}, $\JJn(\lambda)/\lambda$ is bounded on $[1,+\infty)$ since $p\ge 2 +4/N$. Therefore, for every $\lambda \ge 1$,
\[
\lambda\mu = \lambda\|u\|_2^2 \le \|\nabla u \|_2^2 +\lambda \|u\|_2^2= \|u\|_p^p = \frac1\kappa \JJn(\lambda) \le C\lambda
\]
and the proof is complete.
\end{proof}

\begin{remark} We notice that, when $p = 2+4/N$, recalling \eqref{eq:muN},
\label{SmuN}
\[
\mu_p^{nod}\ge 2\mu_N.
\]
Indeed, for every $\lambda >-\lambda_2$, since $\JJn$ is locally Lipschitz continuous,
\[
\JJn(\lambda) = \int_{-\lambda_2}^\lambda (\JJn)'(t)\,dt \le (\lambda+\lambda_2)\frac{\mu_p^{nod}}2
\]
so that, by \eqref{J/lambda},
\[
\mu_N = \lim_{\lambda \to \infty} \frac{\JJn(\lambda)}{\lambda} \le \lim_{\lambda \to \infty} \frac{\lambda+\lambda_2}{\lambda}\frac{\mu_p^{nod}}2 = \frac{\mu_p^{nod}}2. 
\]
\end{remark}
\bigskip

\begin{proof}[Proof of Theorem \ref{thm:masses}, nodal case] Point $(i)$ follows directly by Lemmas \ref{minimum}--\ref{sups}--\ref{J'bounds}. When $p \ge 2+4/N$, the same results show that $\mu_p^{nod}$ is finite, $(0,\mu_p^{nod})\subset M_p^{nod}(\Omega)$, and that $M_p^{nod}(\Omega) \cap (\mu_p^{nod},+\infty) = \emptyset$. This proves point $(ii)$ for $p=2+\frac4N$ and, to complete the proof of point $(iii)$ too, we are left to show that $\mu_p^{nod} \in M_p^{nod}(\Omega)$ when $p$ is $L^2$-supercritical. To this aim, let $\mu_n \nearrow \mu_p^{nod}$ and let $u_n$ be a sequence of nodal action ground states of mass $\mu_n$ in some $\NN_{\lambda_n}^{nod}(\Omega)$, with $\lambda_n \in (-\lambda_2,+\infty)$ for every $n$. We notice that no subsequence of $\lambda_n$ can converge to $-\lambda_2$, since in this case, for such subsequence (not relabeled), we would have
\[
\mu_n = \|u_n\|_2^2 \le C \|u_n\|_p^2 = C\left(\frac1\kappa \JJn(\lambda_n)\right)^{2/p} \to 0
\]
by the continuity of $ \JJn$. Similarly, no subsequence of $\lambda_n$ can tend to $+\infty$, because we would have
\[
\lambda_n \mu_n \le \|\nabla u_n\|_2^2 + \lambda_n\mu_n = \|u_n\|_p^p = \frac1\kappa \JJn(\lambda_n),
\]
entailing that $\mu_n \le  \frac1\kappa \JJn(\lambda_n)/\lambda_n \to 0$ as $n \to \infty$ by Proposition \ref{lem:J} (since $p>2+\frac4N$). Therefore, we can assume that, up to subsequences, $\lambda_n \to \lambda \in (-\lambda_2,+\infty)$ as $n\to \infty$. Then, by Proposition \ref{convergence_NGS}, up to subsequences we have that $u_n$ converges in $H^1_0(\Omega)$ to a nodal action ground state $u \in \NN_{\lambda}^{nod}(\Omega)$. Since $\|u\|_2^2 = \mu_p^{nod}$, we see that $\mu_p^{nod} \in M_p^{nod}(\Omega)$ and the proof is complete. 
\end{proof}

\section{Proof of Theorems \ref{thm:exnod}--\ref{thm:LENS}--\ref{thm:super}}
\label{sec:super}

In this section we prove the main results of the paper with respect to normalized nodal solutions and least energy normalized solutions, i.e. Theorems \ref{thm:exnod}--\ref{thm:LENS}--\ref{thm:super}.

The proof of Theorems \ref{thm:exnod}--\ref{thm:LENS} is a direct consequence of the discussion developed in Section \ref{sec:masses}. In particular, Theorem \ref{thm:exnod} is a corollary of Theorem \ref{thm:masses}.  

\begin{proof}[Proof of Theorems \ref{thm:LENS}]
	Fix any $\mu >0$ if $p< 2+4/N$ or $\mu \in (0,2\mu_N)$ if $p=2+4/N$ (where $\mu_N$ is as in \eqref{eq:muN}), and let 
	$\wlambda$ be a global minimizer of the function $f_\mu(\lambda) = \JJn(\lambda) -\frac\mu 2\lambda$ (see Remark \ref{rem:GlobMin}). Take 
	$\wu$ to be a nodal action ground state of mass $\mu$ corresponding to $\wlambda$
	 (when $p=2+\frac4N$, the fact that such a $\wu$ exists for every $\mu<2\mu_N$ is guaranteed by Remark \ref{SmuN}).  Let then $w$ be any other nodal solution of \eqref{nlse}, for some $\lambda_w \in \R$. Then
	\begin{align*}
	E(w,\Omega) &= J_{\lambda_w}(w,\Omega) -\frac\mu 2\lambda_w \ge \JJn(\lambda_w) -\frac\mu 2\lambda_w \ge \min_{\lambda \in \R} \left(\JJn(\lambda)-\frac\mu 2\lambda \right) \\
	& = \JJn(\wlambda)-\frac\mu 2\wlambda
	= J_\wlambda(\wu,\Omega) -\frac\mu 2\wlambda = E(\wu,\Omega),
	\end{align*}
	that is the nodal action ground state $\wu$ is a least energy normalized nodal solution with mass $\mu$. Moreover, if $w$ is another least energy normalized nodal solution with mass $\mu$, the previous lines become a chain of equalities, showing in particular that $J_{\lambda_w}(w,\Omega)=\JJ_\Omega^{nod}(\lambda_w)$, i.e. $w$ is a nodal action ground state in $\NN_{\lambda_w}^{nod}(\Omega)$.
\end{proof}

We are thus left to prove Theorem \ref {thm:super}. To this end, in what follows we take
\[
p>p_c:=2+\frac4N,\qquad\Omega\text{ star-shaped with respect to }0.
\]
As is quite common in the literature, working on star-shaped domains allows one to profit of the Poho\v{z}aev identity. In particular, we will use the following fact.

\begin{proposition}
\label{lower_bound_energy}
Assume that $\Omega\subset\R^N$ is bounded, smooth and star-shaped. 
Then, for every solution $u$ of \eqref{nlseNOMASS},     
\begin{equation*}
E(u,\Omega) \ge \frac{N(p - p_c)}{4p} \| u \|_p^p.
\end{equation*}
\end{proposition}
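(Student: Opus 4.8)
The plan is to combine two integral identities satisfied by any solution $u$ of \eqref{nlseNOMASS}: the Nehari identity, obtained by testing the equation against $u$ itself, and the Poho\v{z}aev identity, obtained by testing against $x\cdot\nabla u$. Testing against $u$ gives immediately
\[
\|\nabla u\|_2^2 + \lambda\|u\|_2^2 = \|u\|_p^p,
\]
where $\lambda$ is the frequency associated to $u$ in \eqref{nlseNOMASS}. The star-shapedness of $\Omega$ will make the boundary contribution in the Poho\v{z}aev identity have a definite sign, and this is precisely what produces the one-sided estimate in the statement; note that no sign assumption on $\lambda$ is needed, since $\lambda$ will be eliminated between the two relations.

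To set up the second identity, I would multiply the equation by $x\cdot\nabla u$ and integrate by parts over $\Omega$, using $u=0$ on $\partial\Omega$ (so that $\nabla u = (\partial_\nu u)\nu$ on the boundary, where $\nu$ denotes the outer unit normal). The standard computation yields the Poho\v{z}aev identity
\[
\frac{N-2}{2}\|\nabla u\|_2^2 + \frac{N\lambda}{2}\|u\|_2^2 - \frac{N}{p}\|u\|_p^p = -\frac12\int_{\partial\Omega}(x\cdot\nu)\,|\partial_\nu u|^2\,d\sigma.
\]
Since $\Omega$ is star-shaped with respect to $0$, one has $x\cdot\nu\ge 0$ on $\partial\Omega$, so the right-hand side is $\le 0$ and therefore
\[
\frac{N-2}{2}\|\nabla u\|_2^2 + \frac{N\lambda}{2}\|u\|_2^2 - \frac{N}{p}\|u\|_p^p \le 0.
\]

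I would then eliminate the term $\lambda\|u\|_2^2$ between the two relations: substituting $\lambda\|u\|_2^2 = \|u\|_p^p - \|\nabla u\|_2^2$ from the Nehari identity into the previous inequality and simplifying the coefficients (the $\|\nabla u\|_2^2$ terms combine to $-\|\nabla u\|_2^2$, and the $\|u\|_p^p$ terms to $\frac{N(p-2)}{2p}\|u\|_p^p$) gives
\[
\|\nabla u\|_2^2 \ge \frac{N(p-2)}{2p}\,\|u\|_p^p.
\]
Plugging this into $E(u,\Omega)=\frac12\|\nabla u\|_2^2-\frac1p\|u\|_p^p$ and using the algebraic identity $\frac{N(p-2)}{4p}-\frac1p=\frac{N(p-p_c)}{4p}$ yields the claimed bound. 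It is exactly the hypothesis $p>p_c$ that makes this constant positive, so that the estimate is informative.

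The only genuinely delicate point is the rigorous justification of the Poho\v{z}aev identity, since it features boundary integrals of $|\partial_\nu u|^2$: one needs $u$ to be regular enough, say $u\in H^2(\Omega)\cap C^1(\overline{\Omega})$, for the integration by parts and for these boundary terms to be meaningful. This is guaranteed here by elliptic regularity, because $\Omega$ is smooth and $p<2^*$, so that any $H_0^1$ solution of \eqref{nlseNOMASS} is in fact classical up to the boundary. Once the identity is in hand, the rest is elementary algebra on the two relations.
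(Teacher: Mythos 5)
Your proposal is correct and follows essentially the same route as the paper: the Poho\v{z}aev identity with the boundary term discarded via star-shapedness, combined with the Nehari identity $\|\nabla u\|_2^2+\lambda\|u\|_2^2=\|u\|_p^p$, and elliptic regularity to justify the boundary integral. The only (immaterial) difference is that you eliminate $\lambda\|u\|_2^2$ to get a lower bound on $\|\nabla u\|_2^2$, whereas the paper eliminates $\|\nabla u\|_2^2$ and bounds $E=J_\lambda-\tfrac{\lambda}{2}\|u\|_2^2$ directly; the algebra is equivalent and both yield the constant $\frac{N(p-p_c)}{4p}$.
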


\begin{proof}
Up to translating the domain, we may assume that $\Omega$ is star-shaped with respect to $0$. As $\Omega$ is regular, an $H^1$ solution $u$ of \eqref{nlseNOMASS} is in ${\mathcal C}^2(\overline\Omega)$ and the Poho\v{z}aev identity (see e.g. \cite[Chapter III, Lemma 1.4]{Str}) implies that
    \begin{equation*}
        \frac{N-2}{2} \| \nabla u \|_2^2
        - \frac{N}{p} \| u \|_p^p
        + \frac{\lambda N}{2} \| u \|_2^2
        + \frac{1}{2} \int_{\partial \Omega} | \partial_{\nu}u |^2 x \cdot \nu\, d \sigma = 0,
    \end{equation*}
    where $\nu$ is the exterior unit normal.
    Since $\Omega$ is star-shaped,  $x \cdot \nu \ge 0$ for every $x \in \partial \Omega$,
    so that 
    \begin{equation*}
        \frac{N-2}{2} \| \nabla u \|_2^2
        - \frac{N}{p} \| u \|_p^p
        + \frac{\lambda N}{2} \| u \|_2^2
        \le 0.
    \end{equation*}
    Recalling that $\| \nabla u \|_2^2 + \lambda \| u \|_2^2 = \| u \|_p^p$ because $u\in\NN_{\lambda}(\Omega)$, we see that
    \begin{equation*}
        \Bigl( \frac{N-2}{2} - \frac{N}{p} \Bigr) \| u \|_p^p
        + \lambda \| u \|_2^2
        \le 0,
    \end{equation*}
entailing
    \begin{equation*}
        E(u,\Omega)
        = J_{\lambda}(u,\Omega) - \frac{\lambda \| u \|_2^2}{2}
        \ge \Bigl( \frac{p-2}{2p} + \frac{p(N-2) - 2N}{4p} \Bigr) \|u\|_p^p
        = \frac{N(p-p_c)}{4p} \| u \|_p^p. \qedhere
    \end{equation*}
\end{proof}

\begin{proof}[Proof of Theorem \ref{thm:super}] The argument is divided in two steps.

\smallskip
{\em Step 1.}  Letting $\mu_p$ the threshold given by Theorem \ref{thm:exnod}$(iii)$, we prove that there exists a least energy normalized solution for every $\mu\leq\mu_p$. To this end, let $\mu\leq\mu_p$ be fixed and
\[
\Sf_{\mu}:=\left\{u\in H_0^1(\Omega)\,:\,u \text{ solves \eqref{nlse} for some }\lambda\in\R\right\}
\]
be the set of all solutions of \eqref{nlse}. Since $\Sf_{\mu}\neq\emptyset$ by Theorem \ref{thm:exnod}$(iii)$, let $(u_n)_{n} \subset\Sf_\mu$ be such that
    \begin{equation*}
        -\Delta u_n + \lambda_n u_n = |u_n|^{p-2}u_n\qquad\text{and}\qquad
        E(u_n,\Omega) \xrightarrow[n \rightarrow \infty]{} \inf_{v \in \Sf_{\mu}} E(v,\Omega).
    \end{equation*}
    Observe first that, since for all $n$ we have
    \begin{equation*}
        E(u_n,\Omega)
        = J_{\lambda_n}(u_n,\Omega) - \frac{\lambda_n}2 \mu
        \ge - \frac{\lambda_n}2 \mu,
    \end{equation*}
    the sequence $(\lambda_n)_n$ is bounded from below. Furthermore,
    according to Proposition~\ref{lower_bound_energy}, as $u_n\in \NN_{\lambda_n}(\Omega)$, we have
    \begin{equation*}
        E(u_n,\Omega)
        \ge \frac{N(p-p_c)}{2(p-2)} J_{\lambda_n}(u_n,\Omega)
        \ge \frac{N(p-p_c)}{2(p-2)} \inf_{v \in \NN_{\lambda_n}(\R^N)} J(v,\R^N)
        = \frac{N(p-p_c)}{2(p-2)} \JJ_{\R^N}(\lambda_n).
    \end{equation*}
    Recalling that $\displaystyle \JJ_{\R^N}(s)= s^{\frac{2N-p(N-2)}{2(p-2)}} \JJ_{\R^N}(1) \to +\infty$ as $s \to +\infty$, and since $p_c<p<2^*$,
    this implies that $(\lambda_n)_n$ is also bounded from above and hence, up to subsequences, $\lambda_n \rightarrow \lambda$ for some $\lambda \in \R$.
	Since $(E(u_n,\Omega))_n$ is bounded and $\|u_n\|_2 = \mu$, so is $(J_{\lambda_n}(u_n,\Omega))_n$. 
  As usual, this implies that $(u_n)_n$ is bounded in $H^1(\Omega)$. Therefore, we may assume that $u_n \rightharpoonup u$ in $H^1(\Omega)$ and $u_n \to u$ in $L^q(\Omega)$ for every $q \in [2,2^*)$, showing that $u$ is a solution to \eqref{nlseNOMASS} of mass $\mu$.
  Moreover, by weak lower semi-continuity,
    \begin{equation*}
        E(u,\Omega) \le \liminf_{n\to\infty}E(u_n,\Omega)= \inf_{v \in \Sf_{\mu}} E(v,\Omega),
    \end{equation*}
    so that $u$ is a least energy normalized solution of mass $\mu$.
    
   	Arguing analogously one can prove that, taking $\mu_p^{nod}$ as in Theorem \ref{thm:exnod} and letting
   	\[
   	\mathcal{S}_\mu^{nod}:=\left\{u\in H_0^1(\Omega)\,:\,u\text{ solves \eqref{nlse} for some }\lambda\in\R,\text{ }u^\pm\not\equiv0\right\}
   	\]
    the set of all nodal solutions of \eqref{nlse}, for every $\mu\leq\mu_p^{nod}$ there exists a least energy normalized nodal solution, that is $u\in\Sf_\mu^{nod}$ such that
    \[
    E(u,\Omega)=\inf_{v\in\Sf_\mu^{nod}}E(v,\Omega).
    \]
    Indeed, the argument above here shows again that, up to subsequences,  a sequence $(u_n)_n\subset\Sf_\mu^{nod}$ such that $E(u_n,\Omega)\to\inf_{v\in\Sf_\mu^{nod}}E(v,\Omega)$ converges weakly in $H^1(\Omega)$ to a solution $u$ of \eqref{nlse}, and to conclude that $u$ is a least energy normalized nodal solution it remains to show that $u$ is still nodal. To see this, we notice that, by strong convergence in $L^2(\Omega)$, $u \not\equiv 0$. If it were $u \geq 0$, by Remark~\ref{no_pos_sol} we would have  $\lambda > -\lambda_1$.
    But then, 
    \begin{equation*}
   \kappa\liminf_n \|u_n^\pm\|_p^p =    \liminf_n J_{\lambda_n}(u_n^\pm,\Omega) \ge \liminf_n \mathcal{J}_\Omega(\lambda_n) = \mathcal{J}_\Omega(\lambda) > 0,
    \end{equation*}
    so that neither $(u_n^+)_n$ nor $(u_n^-)_n$ converge to zero in $L^p(\Omega)$,
    contradicting the assumption $u \geq 0$.

\smallskip
{\em Step 2.} We are left to prove that, when the mass is sufficiently small, least energy normalized solutions/least energy normalized nodal solutions are action ground states/nodal action ground states with frequency uniformly bounded from above. We give the details of the proof for least energy normalized nodal solutions, the signed case being analogous.

Set 
\[
\overline{\lambda}_p^{nod}:=\frac{2p\lambda_2}{N(p-p_c)},\qquad\overline{\mu}_p^{nod}:=\frac{2\JJ_\Omega^{nod}(\overline{\lambda}_p^{nod})}{\overline{\lambda}_p^{nod}+\lambda_2}.
\]
Note that, since $\JJ_\Omega^{nod}$ is locally Lipschitz by Proposition \ref{deraction} and $\JJ_\Omega^{nod}(-\lambda_2)=0$ by Proposition \ref{properties_NGS}, then
\[
\JJ_\Omega^{nod}\big(\overline{\lambda}_p^{nod}\big)=\int_{-\lambda_2}^{\overline{\lambda}_p^{nod}}(\JJ_\Omega^{nod})'(s)\,ds\leq (\overline{\lambda}_p^{nod}+\lambda_2)\sup_{\lambda}(\JJ_{\Omega}^{nod})'(\lambda),
\]
so that $\overline{\mu}_p^{nod}\leq\mu_p^{nod}$, where $\mu_p^{nod}$ is the number defined in \eqref{mupnod}.
Moreover, by Proposition \ref{lower_bound_energy} and the definition of $\overline{\lambda}_p^{nod}$, if $u\in\NN_{\lambda}^{nod}(\Omega)$ with $\lambda\geq\overline{\lambda}_p^{nod}$ and $\|u\|_2^2=\mu$, then
   \begin{equation}
\label{energy_lower}
E(u,\Omega)
\ge  \frac{N(p - p_c)}{4p} \| u \|_p^p = \frac{N(p - p_c)}{4p} \left(\|\nabla u\|_2^2+\lambda\| u \|_2^2\right)
> \frac{N(p - p_c)}{4p} \lambda \mu\geq\frac{\lambda_2}2\mu.
\end{equation}
Let then $\mu\leq\overline{\mu}_p^{nod}$ be fixed. Since $\mu\leq\mu_p^{nod}$, the proof of Lemma \ref{minimum} above guarantees that there exists a function $u\in\Sf_\mu^{nod}$ such that $E(u,\Omega)< \JJ_\Omega^{nod}(-\lambda_2)+\frac{\lambda_2}2\mu=\frac{\lambda_2}2\mu$, so that
\[
\inf_{v\in\Sf_\mu^{nod}}E(v,\Omega)<\frac{\lambda_2}2\mu.
\]
By \eqref{energy_lower}, any least energy normalized nodal solution in $\Sf_\mu^{nod}$ (which exist by Step 1) must then belong to $\NN_{\lambda}^{nod}(\Omega)$ for some $\lambda<\overline{\lambda}_p^{nod}$. Since $\overline\lambda_p^{nod}$ depends only on $p$ and $\Omega$, this proves that least energy normalized nodal solutions have uniformly bounded frequency, and 
\[
\inf_{v\in\Sf_\mu^{nod}}E(v,\Omega)=\inf_{\substack{v\in\Sf_\mu^{nod}\cap\NN_{\lambda}^{nod}(\Omega) \\ \lambda<\overline\lambda_p^{nod}}}E(v,\Omega).
\]
To conclude, it remains to show that least energy normalized nodal solutions are always nodal action ground states for $\mu \le \overline\mu_p^{nod}$. This follows arguing exactly as in the proof of Lemma \ref{minimum} and of Theorem \ref{thm:LENS}, noting that the function $f_\mu(\lambda):=\JJ_\Omega^{nod}(\lambda)-\frac\lambda2\mu$ has a minimum point in $(-\lambda_2, \overline\lambda_p^{nod})$, since
it is continuous, $f_\mu(s) < f_\mu(-\lambda_2)$ for $s$ in a right neighbourhood of $-\lambda_2$, and  $f_\mu\big(\overline\lambda_p^{nod}\big)=\JJ_{\Omega}^{nod}\big(\overline\lambda_p^{nod}\big)-\frac{\overline\lambda_p^{nod}}2\mu\geq\frac{\lambda_2}2\mu=f_\mu(-\lambda_2)$.
\end{proof}

\section*{Acknowledgements}
D.G. is an F.R.S.-FNRS Research Fellow.
All authors acknowledge that this work has been partially supported by the IEA Project ``Nonlinear Schr\"odinger Equations on Metric Graphs''. 
C.D. and D.G. acknowledge that this work has been carried out in the framework of the project NQG (ANR-23-CE40-0005-01) funded by the French National Research Agency (ANR). 

S.D. acknowledges that this study was carried out within the project E53D23005450006 ``Nonlinear dispersive equations in presence of singularities'' -- funded by European Union -- Next Generation EU within the PRIN 2022 program (D.D. 104 -02/02/2022 Ministero dell'Universit\'a e della Ricerca). This manuscript reflects only the author's views and opinions and the Ministry cannot be considered responsible for them.


\end{document}